\newtheorem{theorem}{Theorem}
\newtheorem{proposition}[theorem]{Proposition}
\newtheorem{corollary}[theorem]{Corollary}
\newtheorem{lemma}[theorem]{Lemma}
\theoremstyle{definition}
\newtheorem{definition}[theorem]{Definition}
\newtheorem{remark}[theorem]{Remark}
\numberwithin{equation}{section}
\numberwithin{theorem}{section}
\newcommand{\G}{\Gamma}
\renewcommand{\l}{\lambda}
\renewcommand{\r}{\rho}
\newcommand{\Dc}{{\mathcal D}}
\newcommand{\C}{{\mathbb C}}
\newcommand{\R}{{\mathbb R}}
\newcommand{\calB}{\mathcal{B}}
\newcommand{\calD}{\mathcal{D}}
\newcommand{\calF}{\mathcal{F}}
\newcommand{\calT}{\mathcal{T}}
\newcommand{\scrD}{\mathscr{D}}
\newcommand{\Hy}{\mathbb{H}}
\newcommand{\al}{\alpha}
\newcommand{\Gam}{\Gamma}
\newcommand{\del}{\delta}
\newcommand{\Del}{\Delta}
\newcommand{\ep}{\epsilon}
\newcommand{\lam}{\lambda}
\newcommand{\Lam}{\Lambda}
\newcommand{\sig}{\sigma}
\newcommand\PSL{\operatorname{PSL}}
\newcommand\Out{\textnormal{Out}}
\newcommand{\ra}{\rightarrow}
\def\({\left(}
\def\){\right)}
\def\l\{{\left\{}
\def\r\}{\right\}}
\def\wt{\widetilde}
\def\wh{\widehat}
\def\gm{\mathfrak{g}}
\def\fraka{\mathfrak{a}}
\def\frakD{\mathfrak{D}}
\def\frakR{\mathfrak{R}}
\def\wbar{\overline}
\def\ov{\overline}
\def\id{{\rm id}}
\def\Cay{{\rm Cay}}
\def\Leb{{\rm Leb}}
\def\ev{{\rm ev\,}}
\def\1{{\bf 1}}
\def\gr{{\rm gr}}
\def\conj{{\bf conj}}
\def\ac{{\rm ac}}
\def\sing{{\rm sing}}
\def\H{{\mathbb H}}
\def\gf{{\frak g}}
\def\bS{{\bf S}}
\def\CAT{{\rm CAT}}
\def\Dil{{\rm Dil}}
\DeclareMathOperator{\diam}{diam}
\newcommand\numberthis{\addtocounter{equation}{1}\tag{\theequation}}
\newtheorem*{remark*}{Remark}
\title[Approximate marked length spectrum rigidity in coarse geometry]{Approximate marked length spectrum rigidity in coarse geometry}
\author{\small{Stephen Cantrell and Eduardo Reyes}}
\begin{document}

\begin{abstract}
We compare the marked length spectra of isometric actions of groups with non-positively curved features. Inspired by the recent works of Butt we study approximate versions of marked length spectrum rigidity. % for isometric group actions. %That is, we that show that the full marked length spectrum is approximately determined by the length spectrum restricted to an appropriate finite set of conjugacy classes. 
 We show that for pairs of metrics, the supremum of the quotient of their marked length spectra is approximately determined by their marked length spectra restricted to an appropriate finite set of conjugacy classes.
Applying this to fundamental groups of closed negatively curved Riemannian manifolds allows us to refine Butt's result. Our results however apply in greater generality and do not require the acting group to be hyperbolic. For example we are able to compare the marked length spectra associated to mapping class groups acting on their Cayley graphs or on the curve graph.
\end{abstract}
\maketitle

\section{Introduction}\label{sec:intro}

\subsection{Riemannian manifolds}
Suppose that $(M,\mathfrak{g})$ is a closed, negatively curved Riemannian manifold. There is a bijection between the (non-trivial) conjugacy classes $\conj=\conj(\G)$ in the fundamental group $\G := \pi_1(M)$ and the oriented closed geodesics on $(M,\mathfrak{g})$. Through this correspondence we obtain a map
\[
\ell_\mathfrak{g} : \conj(\G) \to \R_{>0}
\]
that sends each conjugacy class to the length of the corresponding closed geodesic. This map, which is referred to as the \textit{marked length spectrum} of $(M,\mathfrak{g})$, records the lengths of the closed geodesics on $(M,\mathfrak{g})$ and orders them according to the conjugacy classes in $\G$. The marked length spectrum rigidity conjecture (attributed to Burns and Katok \cite{burns.katok}) predicts that $\ell_\mathfrak{g}$ determines the isometry class of $\mathfrak{g}$ amongst all negatively curved metrics on $M$, i.e. if $\mathfrak{g}_\ast$ is another negatively curved Riemannian metric on $M$ with the same marked length spectrum as $\mathfrak{g}$ then $\mathfrak{g}$ and $\mathfrak{g}_\ast$ must be isometric. The conjecture has been confirmed independently by Croke \cite{croke} and Otal \cite{otal} for surfaces. In higher dimensions, the conjecture is known to hold if one of the metrics is locally symmetric \cite{BCG,hamenstadt}.  A recent breakthrough of Guillarmou and Lefeuvre \cite{guilef} verifies a local version of the conjecture for Riemannian manifolds with Anosov geodesic flow and non-positive curvature. In general the conjecture remains open.

When $S$ is a closed, orientable surface of genus $g \ge 2$, the marked hyperbolic (i.e. constant curvature $-1$) metrics on $S$ form a moduli space called Teichm\"uller space $\calT(S)$. It is well-known, thanks to Frenchel-Nielsen coordinates, that the length spectra of points in $\calT(S)$ are determined by their values taken on finitely many conjugacy classes. In fact there are $6g - 6$ conjugacy classes that determine the full marked length spectrum (and hence isometry class) of hyperbolic metrics on $S$ \cite{schmutz}. %This property does not hold for negatively curved metrics as we can locally perturb a negatively curved metric to obtain a new, non-isometric, negatively curved metric that has marked length spectrum close to the original metric. 
This property does not hold for arbitrary metrics of (variable) negative curvature. Indeed, if $\mathfrak{g}$ is a negatively curved metric on $S$ and $F$ is any finite collection of closed geodesics on $(S,\mathfrak{g})$, then we can perturb $\mathfrak{g}$ on the complement of $F$ to obtain a new metric $\mathfrak{g}'$. We can do this so that $\mathfrak{g}'$ non-isometric to $\mathfrak{g}$, is still negatively curved and so that $\ell_{\mathfrak{g}}$ and $\ell_{\mathfrak{g}'}$ coincide on $F$.

It is then natural to ask to what extent the marked length spectrum of a negatively curved Riemannian metric is determined by its marked length spectrum restricted to a finite collection of closed geodesics. This question was considered by Butt in \cite{butt.2} and \cite{butt.1} in which she proves an \textit{approximate length spectrum rigidity} result. Suppose $\mathfrak{g}, \mathfrak{g}_\ast$ are negatively curved Riemannian  metrics on a closed manifold $M$. Roughly speaking Butt's result \cite[Thm.~1.2]{butt.2} states that there exist constants $C>0, \alpha \in (0,1)$ (depending on geometric data) such if $|\ell_\mathfrak{g}[g]/\ell_{\mathfrak{g}_\ast}[g] - 1| < \epsilon$ for all $[g] \in \conj(\G)$  with $\ell_\mathfrak{g}[g] < L$ then $|\ell_\mathfrak{g}[g]/\ell_{\mathfrak{g}_\ast}[g] - 1| < \epsilon + CL^{-\alpha}$ for \emph{all} conjugacy classes $[g] \in \conj(\G)$. That is, for a pair of negatively curved metrics $\mathfrak{g}, \mathfrak{g}_\ast$, precise global bounds comparing their marked length spectra can be deduced from bounds taken over \emph{finitely many} conjugacy classes. The first aim of this paper is to prove a refinement of Butt's results through a concise, coarse geometric argument. We show the following in which we write $\conj'=\conj'(\G)$ for the non-torsion conjugacy classes in $\G$. 
 
\begin{theorem}\label{thm.butt}
Let $(M,\gm)$ and $(M_\ast,\gm_\ast)$ be closed Riemannian manifolds with sectional curvatures lying in the interval $[-\Lam^2,-\lam^2]\subset \R_{<0}$, and let $i_\gm$ be the injectivity radius of $(M,\gm)$. Let $\G$ be the fundamental group of $M$ and suppose there exists an isomorphism $f_\ast:\G \ra \pi_1(M_\ast)$. Then for any $0<\ep_0<1$ there exists some $L_0=L_0(\G,\lam,\Lam,i_\gm,\ep_0)$ such that the following holds.  If $L\geq L_0$, $0<\ep\leq \ep_0$ and 
\begin{equation}\label{eq.buttassumption}
1-\ep\leq \frac{\ell_{\gm_\ast}[f_\ast g]}{\ell_{\gm}[g]}\leq 1+\ep  \ \ \text{ for all } [g]\in \conj' \text{ with }\ell_{\gm_\ast}[f_\ast g]\leq L,
\end{equation}
then 
\begin{equation}\label{eq.buttconclusion}
1-\left(\ep+\frac{C}{L}\right)\leq \frac{\ell_{\gm_\ast}[f_\ast g]}{\ell_\gm[g]}\leq 1+\left(\ep+\frac{C}{L}\right)
\end{equation}
for all $[g]\in \conj'$,
where $C$ depends only on $\G, \lam,\Lam, i_\gm$ and $\ep_0$.
\end{theorem}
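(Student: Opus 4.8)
The plan is to reduce \eqref{eq.buttconclusion} to upper bounds on the two numbers
\[
\Theta:=\sup_{[g]\in\conj'}\frac{\ell_{\gm_\ast}[f_\ast g]}{\ell_\gm[g]},\qquad \Theta':=\sup_{[g]\in\conj'}\frac{\ell_\gm[g]}{\ell_{\gm_\ast}[f_\ast g]},
\]
and then to estimate $\Theta$ (the estimate for $\Theta'$ being entirely symmetric). Both are finite: $\widetilde M$ and $\widetilde{M_\ast}$ are Hadamard manifolds, Gromov hyperbolic with constant controlled by $\lam$, on which $\Gamma$ acts geometrically via $\id$ and $f_\ast$, so each of $\ell_\gm$ and $\ell_{\gm_\ast}\circ f_\ast$ is comparable, up to a multiplicative and an additive constant, to the stable translation length of $\Gamma$ on a Cayley graph; together with $\ell_\gm[g]\ge 2i_\gm$ this gives $\Theta,\Theta'<\infty$. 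Since $\ell_{\gm_\ast}[f_\ast g]/\ell_\gm[g]$ always lies between $1/\Theta'$ and $\Theta$, and $(1-\ep)/(1+(1-\ep)C/L)\ge 1-\ep-C/L$, the bounds $\Theta\le 1+\ep+C/L$ and $\Theta'\le (1-\ep)^{-1}+C/L$ imply \eqref{eq.buttconclusion} for every $[g]\in\conj'$ at once; so it suffices to prove those two bounds.

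Fix $[g]\in\conj'$. If $\ell_{\gm_\ast}[f_\ast g]\le L$ then \eqref{eq.buttassumption} already gives $\ell_{\gm_\ast}[f_\ast g]\le(1+\ep)\ell_\gm[g]$, so assume $\ell_{\gm_\ast}[f_\ast g]>L$. The key step is to \emph{chop the closed $\gm$-geodesic of $g$ into conjugacy classes whose $\gm_\ast$-lengths are all $\le L$}. After replacing $g$ by a conjugate and, if needed, moving the basepoint of $M$ onto the closed geodesic — neither change affects $\ell_\gm$ or $\ell_{\gm_\ast}\circ f_\ast$ — I may assume $o$ lies on the axis of $g$ in $\widetilde M$. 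Choose a subdivision scale $m$ slightly below $L/A_2$, where $\ell_{\gm_\ast}[f_\ast w]\le A_2\ell_\gm[w]+B_2$ for all $w$ is an a priori comparison of the two length spectra, and cut one period $[o,go]$ of the axis into $k=\lceil\ell_\gm[g]/m\rceil$ geodesic sub-arcs of lengths $m_i\in[m/2,m]$, so $\sum_i m_i=\ell_\gm[g]$. Projecting to $M$ and closing up each arc by a shortest geodesic (of length $\le\diam(M,\gm)$) produces $c_1,\dots,c_k\in\conj'$ with $g=c_1\cdots c_k$. Each $c_i$ is represented by a loop consisting of one long geodesic arc and a bounded closing arc; such a loop is a quasigeodesic loop with constants depending only on $\diam(M,\gm)$ and the hyperbolicity constant of $\widetilde M$, so $\ell_\gm[c_i]=m_i+O(1)$, the basepoint $o$ is $O(1)$-close to the axis of $c_i$, and the broken path $o,c_1o,c_1c_2o,\dots,go$ stays within $\diam(M,\gm)$ of the axis of $g$. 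By the choice of $m$, $\ell_{\gm_\ast}[f_\ast c_i]\le L$, so \eqref{eq.buttassumption} yields $\ell_{\gm_\ast}[f_\ast c_i]\le(1+\ep)\ell_\gm[c_i]$.

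Now both marked lengths of $g$ are pinned down. First, $\sum_i\ell_\gm[c_i]=\sum_i m_i+O(k)=\ell_\gm[g]+O(k)$. For the other side, let $\Phi:\widetilde M\to\widetilde{M_\ast}$ be an equivariant quasi-isometric orbit map with $\Phi(o)=o_\ast$; since $o$ is $O(1)$-close to the axis of $c_i$, the image $o_\ast$ is $O(1)$-close to the axis of $f_\ast c_i$ in $\widetilde{M_\ast}$ (quasi-isometries carry geodesics to quasigeodesics, which stay bounded from a geodesic by the Morse lemma). Hence, using the $\CAT(0)$ inequality $d(x,hx)\le\ell(h)+2\,d(x,\mathrm{Ax}(h))$,
\[
\ell_{\gm_\ast}[f_\ast g]\le d_{\gm_\ast}\bigl(o_\ast,f_\ast(g)o_\ast\bigr)\le\sum_{i=1}^{k} d_{\gm_\ast}\bigl(o_\ast,f_\ast(c_i)o_\ast\bigr)\le\sum_{i=1}^{k}\bigl(\ell_{\gm_\ast}[f_\ast c_i]+O(1)\bigr)\le(1+\ep)\sum_{i=1}^{k}\ell_\gm[c_i]+O(k),
\]
so $\ell_{\gm_\ast}[f_\ast g]\le(1+\ep)\ell_\gm[g]+O(k)$ with $k=O(\ell_\gm[g]/L)$; dividing by $\ell_\gm[g]$ and taking the supremum gives $\Theta\le 1+\ep+C/L$. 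Chopping the $\gm$-geodesic again and this time using the other inequality of \eqref{eq.buttassumption} ($\ell_\gm[c_i]\le(1-\ep)^{-1}\ell_{\gm_\ast}[f_\ast c_i]$), together with the fact that $\Phi$ carries the broken path to one fellow-travelling the axis of $f_\ast g$ so that $\sum_i d_{\gm_\ast}(o_\ast,f_\ast(c_i)o_\ast)\le\ell_{\gm_\ast}[f_\ast g]+O(k)$, gives $\Theta'\le (1-\ep)^{-1}+C/L$ in the same way.

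The hard part is \emph{uniformity of the constants}. The comparison parameters $A_2,B_2$, the quasi-isometry constants of $\Phi$, and quantities such as $\diam(M_\ast,\gm_\ast)$ all feed into $C$ and $L_0$, and a priori depend on the varying triple $(M_\ast,\gm_\ast,f_\ast)$, not just on $\Gamma,\lam,\Lam,i_\gm,\ep_0$. One must show they can in fact be taken to depend only on the allowed data: the curvature pinching $[-\Lam^2,-\lam^2]$ controls the hyperbolicity constant of $\widetilde{M_\ast}$, while hypothesis \eqref{eq.buttassumption} is itself strong enough to force the comparison between the two geometric $\Gamma$-actions to have constants close to $1$ on the relevant scales, bounding $A_2$ and $B_2$. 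Making this precise — and choosing $L_0$ large enough that $m\ge 1$ and that \eqref{eq.buttassumption} applies to every $c_i$ — is the technical crux; once it is in place, the additive error $O(k)=O(\ell_\gm[g]/L)$ is absorbed upon dividing by $\ell_\gm[g]$, and the proof is complete.
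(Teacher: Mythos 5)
Your route is genuinely different from the paper's. The paper proves an abstract coarse-geometric inequality (Theorem \ref{thm.approxLSRvscobounded}), driven by the Breuillard--Fujiwara bound on the joint stable length of a generating set and the approximation of displacement by word metrics (Lemma \ref{lem.HDLFapproxword}); it then translates Riemannian data into coarse constants (Lemma \ref{lem.bounddifgeometry}) and feeds the result into a bootstrap (Corollary \ref{coro.firstineq}, Proposition \ref{prop.bounddilfromdil}). You instead cut the closed $\gm$-geodesic of $g$ into sub-arcs represented by elements $c_i$ of controlled length, push forward by a $\Gamma$-equivariant quasi-isometry, and add up the errors. This is closer in spirit to Butt's original arguments, and in principle it can be made to work; but as written it contains a gap you yourself flag as ``the technical crux,'' and it is not a detail --- it is where essentially all the content of the theorem lives.

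Concretely, the constants your argument produces are not a priori of the required form. The subdivision scale $m\approx L/A_2$, the number of pieces $k$, the $O(1)$ in $\ell_\gm[c_i]=m_i+O(1)$, the Morse constants for $\Phi$, and $\diam(M_\ast,\gm_\ast)$ all depend on $(M_\ast,\gm_\ast,f_\ast)$, which is allowed to vary, whereas the final $C,L_0$ must depend only on $\G,\lam,\Lam,i_\gm,\ep_0$. Saying that ``hypothesis \eqref{eq.buttassumption} is strong enough to force the comparison constants close to $1$'' is circular: the comparison constant $A_2$ is essentially $\Theta$, the quantity you are trying to bound, and \eqref{eq.buttassumption} only constrains the finitely many conjugacy classes with $\ell_{\gm_\ast}[f_\ast g]\le L$. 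The paper breaks this circularity via three nontrivial inputs that your sketch does not supply: a lower bound on $i_{\gm_\ast}$ obtained from \eqref{eq.buttassumption} (Corollary \ref{coro.firstineq}(1)); upper bounds on $\mathrm{Vol}(M_\ast,\gm_\ast)$ and hence $\diam(M_\ast,\gm_\ast)$ from the Gromov--Thurston inequality and Croke's isoperimetric inequality together with Bishop--Gromov (Lemma \ref{lem.bounddifgeometry}); and, crucially, a uniform a priori bound on $\sup_{[g]}\ell_{\gm_\ast}[f_\ast g]/\ell_\gm[g]$ coming from compactness (dimension $\ge 3$) or cocompactness of the $\Out(\G)$-action (dimension $2$) on the space of metric structures (Proposition \ref{prop.bounddilfromdil}, which itself rests on \cite{reyes}). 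Without some replacement for these steps your proof does not close.

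A secondary issue: the assertion that $\ell_\gm[c_i]=m_i+O(1)$ with the basepoint $O(1)$-close to the axis of $c_i$ requires an actual ``large displacement plus small backtracking implies axial'' argument in the $\mathrm{CAT}(-\lam^2)$ space $\widetilde M$; the displacement $d_{\widetilde M}(o,c_io)$ being about $m_i$ does not by itself control the translation length of $c_i$. This is fixable, but the verification needs to be there, and its constants fold into the uniformity problem above.
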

This theorem provides a stronger global bound to Butt's result as the error in the comparison inequality (\ref{eq.buttconclusion}) decays like $L^{-1}$ opposed to $L^{-\alpha}$ for $\alpha \in (0,1)$.

%%%%%%%%%%%%%%%%%%%%%%%%%%%%%%%%%%%%%%%%%%%%%%%%%%%%%%%%%%%%%%%%%%%%%%%%%%%%%%%%%%%%%%%%%%%%%%%%%%%%%%%%%%%%%

\subsection{General actions on hyperbolic spaces}\label{sec.introhyperbolic}
To prove Theorem \ref{thm.butt} we use ideas and techniques from geometric group theory. This perspective allows us to prove much more general results. Suppose that a group $\G$ admits an isometric action on a possibly asymmetric metric space $X=(X,d_X)$ (so $d_X$ satisfies all the actions of a metric, except for possibly symmetry). Then the \emph{(stable) translation length function} associated to this action is the map
\[
\ell_d : \conj(\G) \to \R_{\ge 0} \ \text{ given by } \ \ell_X[g] = \lim_{k\to\infty} \frac{d_X(x,g^k\cdot x)}{k}
\]
where $x \in X$ is a fixed base point. When $(M,\mathfrak{g})$ is a closed, negatively curved Riemannian manifold we can recover the marked length spectrum of $\mathfrak{g}$ as the stable translation length function associated to the action of $\pi_1(M)$ on the universal cover $(\widetilde{M},\widetilde{g})$ of $(M,\mathfrak{g})$.

Therefore Theorem \ref{thm.butt} above can be seen as a result comparing the translation length functions associated to two cocompact, isometric actions of (the hyperbolic group) $\G = \pi_1(M)$ on the universal covers $(\widetilde{M},\widetilde{\mathfrak{g}})$ and $(\widetilde{M}_\ast,\widetilde{\mathfrak{g}}_\ast)$. 

More generally, given two isometric actions of $\G$ on $(X,d_X)$ and $(X_\ast,d_\ast)$, their \emph{dilation} is given by
\begin{equation*}\label{eq.defdilation}
\Dil(X_\ast,X):=\inf \{\eta >0 \hspace{1mm} \colon \hspace{1mm} \ell_{X_\ast}[g]\leq \eta \cdot \ell_{X}[g] \text{ for all }[g]\in \conj \} \in [0,+\infty].
\end{equation*}
When $\G$ is hyperbolic, $X$ and $X_\ast$ are symmetric and geodesic and the actions of $\G$ on these spaces are proper and cobounded, the dilations $\Dil(X_\ast,X), \Dil(X,X_\ast)$ are also the optimal multiplicative constants for a $\G$-equivariant quasi-isometry between $X$ and $X_\ast$, see \cite[Thm.~1.1]{cantrell.reyes}.

Our next theorem is then a natural generalisation of Theorem \ref{thm.butt}. Recall that the action of a group $\G$ on a possibly asymmetric metric space $(X,d_X)$ is called $D$-\emph{cobounded} ($D\geq 0$) if for any $x,x' \in X$ we can find $g \in \G$ such that $\max\{d_X(gx,x'),d_X(x',gx)\} \le D$.
%Motivation: 6g-5 theorem for Teichmuller, but false for Outer space, approximate versions Butt's theorem. Want to give coarse proof of Butt's theorem and luckily get a very general result
\begin{theorem}\label{thm.approxLSRvscobounded}
   There exists an absolute constant $K>0$ such that the following holds. Suppose $\G$ is a group acting isometrically on the spaces $X,X_\ast$ such that:
   \begin{itemize}
       \item $X_\ast$ is a $\del$-hyperbolic metric space and the action of $\G$ on this space is non-elementary; and,
       \item $X$ is a (possibly asymmetric) geodesic metric space and the action of $\G$ on $X$ is $D$-cobounded.  
   \end{itemize}
If $L>6D$ then 
\begin{equation*}\label{LSRcoboundedconclusionred}
    \Dil(X_\ast,X)\leq \left(\sup_{0<\ell_X[g]\leq L}\frac{\ell_{X_\ast[g]}}{\ell_X[g]}\right) \cdot \left(\frac{L-2D}{L-6D}\right)+\frac{2K\del}{L-6D}.
   \end{equation*}
%If $L>6D$ and 
%\begin{equation}\label{eq.LSRcoboundedassumptionred}
%\ell_{X_\ast}[g]\leq \eta \ell_X[g] \text{ for all } [g]\in \conj \text{ with }\ell_{X}[g]\leq L,
%\end{equation}
%then \begin{equation*}\label{LSRcoboundedconclusionred}
%    \Dil(X_\ast,X)\leq \eta \cdot \left(\frac{L-2D}{L-6D}\right)+\frac{2K\del}{L-6D}.
%   \end{equation*}
\end{theorem}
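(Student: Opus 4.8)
The plan is to estimate $\ell_{X_\ast}[g]/\ell_X[g]$ for a single conjugacy class $[g]$ with $\ell_X[g]>0$ and then take the supremum over $[g]$, which is exactly $\Dil(X_\ast,X)$. Write $R$ for the supremum appearing on the right-hand side of the claimed inequality and $\lambda:=\ell_X[g]$. If $\lambda\le L$ there is nothing to prove, since $\ell_{X_\ast}[g]\le R\lambda$ and $\tfrac{L-2D}{L-6D}>1$; conjugacy classes that are not loxodromic for the action on $X_\ast$ have $\ell_{X_\ast}[g]=0$ and may be discarded. So assume $g$ is loxodromic on $X_\ast$ with $\lambda>L$. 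Since $\ell_{X_\ast}[g^m]=m\,\ell_{X_\ast}[g]$ and $\ell_X[g^m]=m\lambda$ for every $m$, it suffices to bound $\ell_{X_\ast}[g^m]$ for $m$ large.

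First I would cut a long geodesic of $X$ into short pieces using coboundedness. Fix basepoints $x\in X$, $x_\ast\in X_\ast$, and a cutting length $\ell$ slightly below $L$ (take $\ell=L-6D>0$, leaving slack for later). Let $\gamma$ be a geodesic in $X$ from $x$ to $g^m x$; its length is $m\lambda$ up to an error that is sublinear in $m$. Subdivide $\gamma$ into $k=\lceil\operatorname{length}(\gamma)/\ell\rceil$ arcs of length at most $\ell$, with consecutive endpoints $x=p_0,p_1,\dots,p_k=g^m x$, and by $D$-coboundedness choose $g_i\in\G$ with $\max\{d_X(g_ix,p_i),d_X(p_i,g_ix)\}\le D$, and $g_0=e$, $g_k=g^m$. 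Setting $h_i:=g_{i-1}^{-1}g_i$ gives $h_1h_2\cdots h_k=g^m$, and the triangle inequality (valid without symmetry) yields $\ell_X[h_i]\le d_X(x,h_ix)=d_X(g_{i-1}x,g_ix)\le\ell+2D\le L$, so the hypothesis applies and gives $\ell_{X_\ast}[h_i]\le R\,\ell_X[h_i]\le R(\ell+2D)$. Since the $g_ix$ lie in order within $D$ of $\gamma$, the same reasoning bounds $\ell_X$, and hence (whenever the bound stays $\le L$) $\ell_{X_\ast}$, of every short sub-product $h_ih_{i+1}\cdots h_j$ by essentially $R$ times the total length of its pieces; this records that the decomposition is ``geodesically aligned'' inside $X$.

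The crux is then the recombination: to bound $\ell_{X_\ast}[g^m]=\ell_{X_\ast}[h_1\cdots h_k]$ from above in terms of $\sum_i\ell_{X_\ast}[h_i]\le kR(\ell+2D)$, up to an accumulated error of order $k\delta$ (plus lower-order terms in $D$). Granting such a bound, dividing by $m$ and letting $m\to\infty$ turns $k/m$ into $\lambda/\ell=\lambda/(L-6D)$, and collecting the universal constants into $K$ and optimising the cutting length produces exactly $\ell_{X_\ast}[g]\le R\lambda\cdot\frac{L-2D}{L-6D}+\frac{2K\delta}{L-6D}\cdot\lambda$, which is the claim. This is where $\delta$-hyperbolicity of $X_\ast$ is essential: in an arbitrary space the broken geodesic $x_\ast\to g_1x_\ast\to\cdots\to g^mx_\ast$ assembled from the pieces can be far longer than $d_{X_\ast}(x_\ast,g^mx_\ast)$ — a product of isometries of small translation length can itself have large translation length once their axes drift apart — so one must use that the pieces do not drift apart in $X_\ast$. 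The mechanism I expect is that the alignment of the $h_i$ inside $X$ from the previous paragraph, fed through the hypothesis, forces the quasi-axes of the $h_i$ in $X_\ast$ to fellow-travel a single geodesic (the axis of $g$) up to error $O(\delta)$, by a Morse/stability argument for local quasigeodesics in hyperbolic spaces; since the relevant sub-product estimates can equally be phrased with $\eta:=\Dil(X_\ast,X)$ in place of $R$, this is naturally carried out as a self-improving bound on $\eta$. Controlling this recombination — in particular keeping the accumulated error at the scale $k\delta$ rather than something worse, and extracting the alignment in $X_\ast$ purely from metric data in $X$ together with the hypothesis — is the main obstacle; once it is in place the constants $2D$, $6D$ and $2K\delta$ are just the bookkeeping from the choice $\ell=L-6D$.
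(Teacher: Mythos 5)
Your setup is sound and parallels the paper's in spirit: cutting a geodesic in $X$ into pieces of length about $L-6D$ and approximating the cut points by orbit points via $D$-coboundedness is essentially the same device as the paper's set $S_n=\{g: d_X(x,gx)\le (n+2)D\}$ (Lemma~\ref{lem.HDLFapproxword}(1)). Where the proposal breaks down is exactly the step you flag as the "crux." The recombination cannot be extracted from a Morse/fellow-travelling argument as you describe, for two reasons.

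First, the "alignment in $X$" you record is only an \emph{upper} bound $\ell_X[h_i\cdots h_j]\le(\text{length of subarc})$; it does not give a lower bound on $\ell_X[h_i\cdots h_j]$ — a sub-product can easily have small or zero stable translation length in $X$ even when $d_X(g_{i-1}x,g_jx)$ is large — so the premise of the alignment claim is not actually established. Second, and more fundamentally, even if you had two-sided control in $X$, the hypothesis only gives \emph{upper} bounds $\ell_{X_\ast}[h]\le R\,\ell_X[h]$; these are compatible with every $h_i$ acting elliptically on $X_\ast$, in which case there are no axes to fellow-travel. The phenomenon you correctly identify — that a product of small-translation-length isometries of a hyperbolic space can have large translation length — is \emph{not} ruled out by alignment arguments at the level of a single conjugacy class; it requires genuine information about pairs. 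This is precisely the content of the Breuillard--Fujiwara inequality (Theorem~\ref{thm.breufuji}): for a bounded set $S$ of isometries of a $\delta$-hyperbolic space, $\frakD_{X_\ast}(S)\le K\delta+\tfrac12\sup_{s\in S^2}\ell_{X_\ast}[s]$, controlling the joint stable length of \emph{all} words in $S$ from translation lengths of \emph{two-letter} words alone. That theorem, together with the identification $\Dil(X_\ast,S)=\frakD_{X_\ast}(S)$ (Proposition~\ref{prop.dil=jsr}, which itself needs Lemma~\ref{lem.gam_x} to convert translation-length bounds back into displacement bounds for a non-elementary hyperbolic action), is the missing engine. The paper then concludes by the chain $\Dil(X_\ast,X)\le\Dil(X_\ast,S)\,\Dil(S,X)=\frakD_{X_\ast}(S)\,\Dil(S,X)$ with $S=S_n$ and $n=\lfloor L/2D\rfloor-2$, which, after substituting the Breuillard--Fujiwara bound and the estimates $\frakD_X(S)\le(n+2)D$, $\Dil(S,X)\le(nD)^{-1}$, gives exactly the claimed constants. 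Without an ingredient of Breuillard--Fujiwara strength, the accumulated error in your recombination cannot be kept at scale $O(k\delta)$, so the proposal as written does not close.
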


\begin{remark} In the theorem above, we do not require the action on $X_\ast$ to be proper, cobounded, or even acylindrical. However, $X_\ast$ is an honest metric space (i.e. $d_{X_\ast}$ is symmetric).

%\\
%\noindent (2) We are unsure what $K$ suffices.
\end{remark} 

In the case both $X,X_\ast$ are hyperbolic and geodesic metric spaces and the actions of $\G$ on these spaces are proper and cobounded (so that $\G$ is hyperbolic), Theorem \ref{thm.approxLSRvscobounded} implies the following, which is the coarse analog of Butt's theorem.

\begin{corollary}\label{coro.ineqmetricstructures}
For any $\del,D\geq 0$ there exist positive constants $L_0,C_0$ depending only on $\del$ and $D$ satisfying the following. Let $\G$ a non-elementary hyperbolic group acting properly and $D$-coboundedly on the $\del$-hyperbolic and geodesic metric spaces $X,X_\ast$. If $L>L_0$ and 
\begin{equation*}\label{eq.buttassumptioncoarse}
\al \leq \frac{\ell_{X_\ast}[g]}{\ell_{X}[g]}\leq \beta  \ \ \text{ for all } [g]\in \conj' \text{ with }\ell_{X}[g]\leq L,
\end{equation*}
then 
\begin{equation*}\label{eq.buttconclusioncoarse}
\al \cdot \left(1-\frac{C_0}{L} \right)-\frac{C_0}{L}\leq \frac{\ell_{X_\ast}[g]}{\ell_X[g]}\leq \beta \cdot \left(1+\frac{C_0}{L} \right)+\frac{C_0}{L}
\end{equation*}
for all $[g]\in \conj'$.
\end{corollary}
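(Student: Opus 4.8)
The plan is to derive Corollary~\ref{coro.ineqmetricstructures} by applying Theorem~\ref{thm.approxLSRvscobounded} twice: once in the given orientation to get the upper bound, and once with the roles of $X$ and $X_\ast$ interchanged to get the lower bound. First I would set up the hypotheses: since $\G$ acts properly and $D$-coboundedly on the $\del$-hyperbolic geodesic spaces $X$ and $X_\ast$, the group $\G$ is hyperbolic, and (being non-elementary) its action on each of $X$, $X_\ast$ is non-elementary; also $D$-coboundedness is symmetric in the two spaces and the spaces are honest (symmetric) geodesic metric spaces, so the structural hypotheses of Theorem~\ref{thm.approxLSRvscobounded} hold with the $X$-slot and $X_\ast$-slot playing either role. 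I would then fix $L_0 := 7D$ (or any constant $>6D$), so that for $L > L_0$ we have $L > 6D$ as required, and introduce the abbreviations for the geometric correction factors $\theta(L) := \tfrac{L-2D}{L-6D}$ and $\psi(L) := \tfrac{2K\del}{L-6D}$ appearing in Theorem~\ref{thm.approxLSRvscobounded}.

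For the upper bound: the hypothesis $\ell_{X_\ast}[g]/\ell_X[g] \le \beta$ for all $[g]$ with $\ell_X[g] \le L$ means precisely that $\sup_{0 < \ell_X[g] \le L} \ell_{X_\ast}[g]/\ell_X[g] \le \beta$, so Theorem~\ref{thm.approxLSRvscobounded} gives $\Dil(X_\ast,X) \le \beta\,\theta(L) + \psi(L)$, hence $\ell_{X_\ast}[g] \le (\beta\,\theta(L)+\psi(L))\,\ell_X[g]$ for all $[g] \in \conj$. It then remains to absorb $\theta(L)$ and $\psi(L)$ into the claimed form $\beta(1 + C_0/L) + C_0/L$; since $\theta(L) = 1 + \tfrac{4D}{L-6D}$ and for $L > 7D$ one has $L - 6D > L/7$, both $\tfrac{4D}{L-6D}$ and $\psi(L) = \tfrac{2K\del}{L-6D}$ are bounded by a constant depending only on $D$, $\del$ (and $K$) divided by $L$, so a suitable $C_0 = C_0(\del, D)$ works. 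For the lower bound I would apply the theorem to the pair $(X, X_\ast)$ in that order. Here I need an upper bound for $\sup_{0 < \ell_{X_\ast}[g] \le L} \ell_X[g]/\ell_{X_\ast}[g]$; the subtlety is that the hypothesis controls the ratio only on the set $\{\ell_X[g] \le L\}$, not on $\{\ell_{X_\ast}[g] \le L\}$. However, the hypothesis $\ell_{X_\ast}[g] \ge \al\,\ell_X[g]$ (valid when $\ell_X[g] \le L$) combined with a crude a priori comparison — namely that $\Dil(X,X_\ast)$ and $\Dil(X_\ast,X)$ are finite because both actions are proper and cobounded, which by \cite[Thm.~1.1]{cantrell.reyes} makes $X$ and $X_\ast$ $\G$-equivariantly quasi-isometric — lets me bound $\ell_X[g]/\ell_{X_\ast}[g]$ on all of $\conj'$, and on the relevant range by roughly $1/\al$ up to the geometric corrections. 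Running Theorem~\ref{thm.approxLSRvscobounded} for $(X,X_\ast)$ then yields $\Dil(X,X_\ast) \le (1/\al)\,\theta(L) + \psi(L)$ modulo the care just described, i.e. $\ell_X[g] \le ((1/\al)\theta(L)+\psi(L))\ell_{X_\ast}[g]$, which rearranges to $\ell_{X_\ast}[g]/\ell_X[g] \ge (\theta(L) + \al\,\psi(L))^{-1} \ge \al(1 - C_0/L) - C_0/L$ after the same kind of elementary estimate on $\theta$ and $\psi$, enlarging $C_0$ if needed.

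I expect the main obstacle to be the lower-bound direction, specifically handling the mismatch between the set on which the hypothesis is assumed, $\{[g] : \ell_X[g] \le L\}$, and the set appearing in the $\sup$ when Theorem~\ref{thm.approxLSRvscobounded} is applied to $(X, X_\ast)$, namely $\{[g] : \ell_{X_\ast}[g] \le L\}$. One must argue that conjugacy classes with small $\ell_{X_\ast}$ but large $\ell_X$ either do not exist in a damaging way or contribute only an error of the right order; the clean way is to invoke the bi-Lipschitz equivalence of $\ell_X$ and $\ell_{X_\ast}$ coming from properness and coboundedness (so that $\ell_{X_\ast}[g] \le L$ forces $\ell_X[g] \le \lam_0 L$ for a constant $\lam_0$ depending only on $\del, D$), and then either apply the theorem at scale $L/\lam_0$ or absorb the discrepancy — both routes change only the constant $C_0$. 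Once this point is dealt with, everything else is bookkeeping with the explicit rational functions $\theta(L)$ and $\psi(L)$, and the extraction of $L_0$ and $C_0$ depending only on $\del$ and $D$ is immediate since $K$ is absolute.
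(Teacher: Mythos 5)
The upper-bound half of your argument is fine: applying Theorem~\ref{thm.approxLSRvscobounded} with $X_\ast$ in the hyperbolic slot and $X$ in the cobounded slot, noting that $\frac{L-2D}{L-6D}=1+\frac{4D}{L-6D}$ and that for $L\geq 12D$ one has $L-6D\geq L/2$, immediately yields $\Dil(X_\ast,X)\leq \beta(1+C_0/L)+C_0/L$ for a $C_0$ controlled by $D$, $\delta$ and the absolute constant $K$. The paper gives no explicit proof of the corollary, so there is nothing to compare against verbatim; but your plan for the lower bound is the natural reverse application, and the obstacle you flag (mismatch of the sets $\{\ell_X\leq L\}$ and $\{\ell_{X_\ast}\leq L'\}$) is exactly the real difficulty.

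The gap is in how you propose to resolve that obstacle. You assert that ``$\ell_{X_\ast}[g]\leq L$ forces $\ell_X[g]\leq\lambda_0 L$ for a constant $\lambda_0$ depending only on $\delta,D$,'' citing the $\G$-equivariant quasi-isometry of $X$ and $X_\ast$. Properness and coboundedness do guarantee that $\Dil(X,X_\ast)$ is \emph{finite}, and Theorem~1.1 of \cite{cantrell.reyes} does identify it with an optimal quasi-isometry constant, but neither statement gives a bound that is \emph{uniform} in $\delta$ and $D$. Indeed, rescaling $X_\ast$ by a factor $c\in(0,1]$ keeps it $\delta$-hyperbolic and the action $D$-cobounded and proper, while $\Dil(X,X_\ast)$ scales like $1/c$; so no $\lambda_0(\delta,D)$ of the kind you need can exist. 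This is precisely the point the authors must confront when proving Theorem~\ref{thm.butt}: there, the swapped direction is handled via Proposition~\ref{prop.bounddilfromdil}, which provides an a priori bound $R$ on the reverse dilation, and that $R$ depends on $\G$ (via $\|\G\|$ and compactness/properness of the relevant space of metric structures), not merely on the analogues of $\delta$ and $D$. Any correct proof of the lower bound either needs such a $\G$-dependent a priori comparison, or must exploit the upper bound already proved plus the hypothesis to \emph{bootstrap} a bound on $\Dil(X,X_\ast)$ --- e.g., using the inequality $R\leq\frac{1}{\alpha}\theta(L/R)+\psi(L/R)$ when $L/R>6D$ and separately ruling out the regime $R\geq L/(6D)$ --- and neither of these steps is carried out in your sketch. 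As written, the crucial claim about $\lambda_0$ is unjustified, so the lower bound does not follow from your argument.
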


As an illustrative example, when $\G$ is the fundamental group of a closed hyperbolic surface $S$ and the actions on $X_\ast=X=\Hy^2$ correspond to points in $\calT(S)$, then $\frac{1}{2}\log \Dil(X_\ast,X)$ is precisely the asymmetric Lipschitz distance $d_L(X_\ast,X)$ introduced by Thurston \cite{thurston}. The $D$-cobounded assumption translates to $X$ being in the $\epsilon$-thick part of $\calT(S)$ for some $\epsilon$ depending only on $D$ (cf. Remark \ref{rmk.RGvscobounded}). Theorem \ref{thm.approxLSRvscobounded} then states that $d_L(X_\ast,X)$ can be approximated by looking at quotients $\ell_{X_\ast}[g]/\ell_X[g]$ for $[g]$ representing a closed geodesic with $X$-length at most $L$ and with error term of form $o(L)$.  This is reminiscent of \cite[Thm.~E]{LRT}, in which $d_L(X_\ast,X)$ is approximated by $\ell_{X_\ast}[g]/\ell_X[g]$ for $[g]$ a conjugacy class in a \emph{short marking} of $X$ and with additive error depending only on the topology of $S$ (note that here there are no assumption of $X$ lying on the thick part of $\calT(S)$).

For $S$ and $\G$ as above, a natural extension of Teichm\"uller space is the \emph{quasi-Fuchsian space} $\mathcal{QF}(S)$, which is the space of convex-cocompact representations $\rho:\G \ra \PSL_2(\C)$ up to conjugacy. The convex-cocompact representation $\rho$ induces a natural action of $\G$ on $\Hy^3$ from which we obtain a length function $\ell_\rho: \conj \ra \R$. As an immediate consequence of Theorem \ref{thm.approxLSRvscobounded} we deduce the following for pairs of convex-cocompact representations.

\begin{corollary}\label{coro.qF}
  Let $K$ be the universal constant from Theorem \ref{thm.approxLSRvscobounded}. Let $\G$ be a hyperbolic surface group and $\rho,\rho_\ast:\G \ra \PSL_2(\C)$ convex-cocompact representations inducing length functions. $\ell_\rho,\ell_{\rho_\ast}:\conj \ra \R$. 
If the $\rho$-action of $\G$ on the convex hull of the limit set $\Lam_{\rho(\G)}$ is $D$-cobounded and $L>6D$, then
\begin{equation*}\label{eq.qF}
   \Dil(\rho_\ast,\rho):= \sup_{[g]\in \conj'}{\frac{\ell_{\rho_\ast}[g]}{\ell_\rho[g]}} \leq \frac{L}{L-6D}\cdot \left(\max_{0<\ell_\rho[g]\leq L}\frac{\ell_{\rho_\ast}[g]}{\ell_{\rho}[g]} \right)+\frac{2K\log{4}}{L-6D}. 
\end{equation*}
\end{corollary}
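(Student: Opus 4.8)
The plan is to apply Theorem~\ref{thm.approxLSRvscobounded} directly, taking $X$ to be the convex hull $\calC_\rho\subset\Hy^3$ of the limit set $\Lam_{\rho(\G)}$ equipped with the isometric $\G$-action induced by $\rho$, and $X_\ast=\Hy^3$ equipped with the isometric $\G$-action induced by $\rho_\ast$. All the work then goes into checking that the hypotheses of that theorem hold for this pair and that its conclusion specialises to the stated inequality.

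First I would deal with the target space. Here $\Hy^3$ is geodesic, and it is standard that $\Hy^3$ is $\del$-hyperbolic with $\del=\log 4$: every geodesic triangle in $\Hy^3$ lies in a totally geodesic copy of $\Hy^2$, so this reduces to the planar estimate. The $\rho_\ast$-action is non-elementary because $\G$ is a non-elementary hyperbolic group (a genus $\geq 2$ surface group) and $\rho_\ast$, being convex-cocompact, is faithful and discrete, so $\rho_\ast(\G)$ is a non-elementary Kleinian group with infinite limit set. By definition the stable translation length function of this action is exactly $\ell_{\rho_\ast}$.

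Next I would handle the source space. Since $\calC_\rho$ is a convex subset of $\Hy^3$ it is a geodesic metric space whose metric is the restriction of that on $\Hy^3$, and $\G$ acts on it isometrically; the $D$-cobounded hypothesis is given. The one genuinely geometric step — everything else being bookkeeping — is to identify the stable translation length function $\ell_X$ of the $\rho$-action on $\calC_\rho$ with $\ell_\rho$ on $\conj'$. Because $\rho$ is convex-cocompact (hence faithful, discrete, and parabolic-free) and $\G$ is a torsion-free surface group, every $\rho(g)$ with $g\neq e$ is loxodromic; its translation axis has both endpoints in $\Lam_{\rho(\G)}$, hence lies inside $\calC_\rho$, and choosing the basepoint on this axis gives $d_{\calC_\rho}(x,\rho(g)^k x)=d_{\Hy^3}(x,\rho(g)^k x)$ for all $k$, so that $\ell_X[g]=\ell_\rho[g]$. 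In particular $\ell_X$ is strictly positive on $\conj'$, so the quantity $\Dil(X_\ast,X)$ in the general sense coincides with $\sup_{[g]\in\conj'}\ell_{\rho_\ast}[g]/\ell_\rho[g]=\Dil(\rho_\ast,\rho)$; and since a convex-cocompact group has only finitely many conjugacy classes of translation length $\leq L$, the supremum over $\{\,[g]:0<\ell_\rho[g]\leq L\,\}$ is attained and equals the maximum appearing in the statement.

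Finally, applying Theorem~\ref{thm.approxLSRvscobounded} to $X=\calC_\rho$, $X_\ast=\Hy^3$ and $\del=\log 4$ gives, for every $L>6D$,
\[
\Dil(\rho_\ast,\rho)\leq\left(\max_{0<\ell_\rho[g]\leq L}\frac{\ell_{\rho_\ast}[g]}{\ell_\rho[g]}\right)\cdot\frac{L-2D}{L-6D}+\frac{2K\log 4}{L-6D},
\]
with $K$ the absolute constant of that theorem. Since $D\geq 0$ we have $\frac{L-2D}{L-6D}\leq\frac{L}{L-6D}$ and the maximum on the right is nonnegative, so this already implies the inequality in the statement. The main obstacle — really the only non-formal point — is the translation-length identification $\ell_X=\ell_\rho$ on the convex hull; I expect no difficulty there beyond carefully invoking convex-cocompactness (no parabolics, and axes of loxodromic elements land in the limit set) so that restricting the ambient $\Hy^3$-metric to $\calC_\rho$ leaves translation lengths unchanged.
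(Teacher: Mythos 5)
Your argument is correct and it is, as far as one can tell, exactly what the authors have in mind — the paper states the corollary with no proof, calling it ``an immediate consequence'' of Theorem~\ref{thm.approxLSRvscobounded}. You apply that theorem with $X=\calC_\rho$ (the convex hull, geodesic and $D$-cobounded by hypothesis) and $X_\ast=\Hy^3$ with the $\rho_\ast$-action (hyperbolic, non-elementary since $\rho_\ast$ is a faithful convex-cocompact representation of a surface group), verify that $\ell_X=\ell_\rho$ via convexity of $\calC_\rho$ and $\ell_{X_\ast}=\ell_{\rho_\ast}$, and then bound $(L-2D)/(L-6D)$ by $L/(L-6D)$. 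All of that is sound, and the observation that $\sup$ can be replaced by $\max$ because convex-cocompactness gives finitely many conjugacy classes below any length threshold is a nice, though inessential, remark.

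The one place where your justification is off (though not the conclusion) is the claim that $\Hy^3$ is $\log 4$-hyperbolic ``because every geodesic triangle lies in a totally geodesic $\Hy^2$, so this reduces to the planar estimate.'' The paper's notion of $\del$-hyperbolicity is the four-point Gromov-product inequality, and the four points $x,y,z,w$ there do not in general lie in a common $\Hy^2$, so the thin-triangle reduction doesn't directly apply. The correct (and cleaner) justification is already in the paper: Lemma~\ref{lem.bounddifgeometry}(3), applied with $\lam=1$ since $\Hy^3$ is $\CAT(-1)$, shows that $\Hy^3$ is $\log 2$-hyperbolic in this four-point sense (via Nica--\v{S}pakula), hence a fortiori $\log 4$-hyperbolic, which is all the corollary needs. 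With that reference substituted for your planar-triangle heuristic, the proof is complete.
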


Back to the coarse geometric setting, a very natural and rich source of isometric actions is given by groups acting on their Cayley graphs. When $(X,d_X)=(\Cay(\G,S),d_S)$ is a Cayley graph of $\G$ associated to the (non-necessarily symmetric) generating set $S$, we denote $\ell_{d_S}$ by $\ell_S$ and we prove the following.

\begin{theorem}\label{thm.approxLSRvswordmetric}
   There exists an absolute constant $K>0$ such that the following holds. Let $\G$ be a group with a non-elementary action by isometries on the $\del$-hyperbolic space $X_\ast$. Let $S\subset \G$ be a set that generates $\G$ as a semigroup, so that $\G$ acts isometrically on the Cayley graph $(\Cay(\G,S),d_S)$.
%\begin{equation}\label{eq:propertypositiveMLSword}
 %    \ell_S[x]>0 \text{ for any non-torsion conjugacy class } [x]\in \conj'(\G).    
 %  \end{equation} 
 Then for any integer $L \geq 1$ we have
   \begin{equation*}
    \Dil(X_\ast,\Cay(\G,S))\leq \frac{K\del}{L}+  \sup_{0<\ell_S[g]\leq 2L}{\frac{\ell_{X_\ast}[g]}{\ell_S[g]}}.
   \end{equation*}
\end{theorem}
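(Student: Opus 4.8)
The plan is to prove the following sharper statement, from which the theorem follows at once by taking a supremum over conjugacy classes: writing $Q:=\sup_{0<\ell_S[h]\le 2L}\ell_{X_\ast}[h]/\ell_S[h]$, every $g\in\G$ with $\ell_S[g]>0$ satisfies $\ell_{X_\ast}[g]/\ell_S[g]\le Q+K\del/L$. We may assume $\ell_{X_\ast}[g]>0$ (else there is nothing to prove) and $Q<\infty$; fixing a base point $o\in X_\ast$ and assuming, as we may, that the orbit map $\gamma\mapsto\gamma\cdot o$ is coarsely Lipschitz (automatic when $S$ is finite), one has $\ell_S[h]=0\Rightarrow\ell_{X_\ast}[h]=0$, and hence $\Dil(X_\ast,\Cay(\G,S))=\sup_{\ell_S[g]>0}\ell_{X_\ast}[g]/\ell_S[g]$. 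Since $\ell_S,\ell_{X_\ast}$ are conjugacy invariant with $\ell_S[g^n]=n\,\ell_S[g]$ and likewise for $\ell_{X_\ast}$, and since $|g^n|_S/n\to\ell_S[g]$, it suffices to show: for $G:=g^n$ with $n$ large, putting $m:=|G|_S$, there is $c\in\G$ with $|c|_S\le 2L$ and $\ell_{X_\ast}[c]/|c|_S\ge \ell_{X_\ast}[G]/m-K\del/(2L)$. Indeed such a $c$ has $\ell_S[c]\le|c|_S\le 2L$, and — unless $\ell_{X_\ast}[g]/\ell_S[g]\le K\del/(2L)$, in which case we are already done — we have $\ell_{X_\ast}[c]/|c|_S>0$ and hence $\ell_S[c]>0$, so $Q\ge \ell_{X_\ast}[c]/\ell_S[c]\ge\ell_{X_\ast}[c]/|c|_S\ge\ell_{X_\ast}[G]/m-K\del/(2L)$; letting $n\to\infty$, so that $\ell_{X_\ast}[G]/m=n\,\ell_{X_\ast}[g]/|g^n|_S\to\ell_{X_\ast}[g]/\ell_S[g]$, yields the claim.

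To build $c$, choose a geodesic word $G=s_1\cdots s_m$ ($s_i\in S$), set $w_0:=e$, $w_i:=s_1\cdots s_i$ (so $w_m=G$), and extend $G$-periodically by $w_{i+m}:=Gw_i$. For indices $i\le i'$ with $i'-i\le 2L$, the path $w_i,w_{i+1},\dots,w_{i'}$ through single $S$-edges gives $|w_i^{-1}w_{i'}|_S\le i'-i$. Assume for simplicity $L\mid m$ (the general case costs only a bounded correction), put $N:=m/L$, and for $j=1,\dots,N$ let $c_j:=w_{(j-1)L}^{-1}w_{(j+1)L}$, indices read in the periodic extension; then $|c_j|_S\le 2L$ and the intervals $[(j-1)L,(j+1)L]$ cover the period $[0,m]$ exactly twice. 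The crux of the proof is an estimate of the form
\[
2\,\ell_{X_\ast}[G]\;\le\;\sum_{j=1}^{N}\ell_{X_\ast}[c_j]\;+\;K\del N .
\]
Granting it, pigeonhole yields $j^\ast$ with $\ell_{X_\ast}[c_{j^\ast}]\ge\tfrac1N\big(2\ell_{X_\ast}[G]-K\del N\big)=2L\,\ell_{X_\ast}[G]/m-K\del$, whence $c:=c_{j^\ast}$ satisfies $\ell_{X_\ast}[c]/|c|_S\ge\big(2L\,\ell_{X_\ast}[G]/m-K\del\big)/(2L)=\ell_{X_\ast}[G]/m-K\del/(2L)$, exactly as needed.

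It remains to prove the displayed estimate; this is where $\del$-hyperbolicity and non-elementarity of the action on $X_\ast$ enter, and where I expect the real work to lie. Since $\ell_{X_\ast}[G]>0$, $G$ is loxodromic on $X_\ast$; let $\alpha$ be a quasi-axis for $G$ and take $o\in\alpha$. The points $\xi_i:=w_i\cdot o$ form a $G$-periodic coarse path ($\xi_{i+m}=G\xi_i$) whose nearest-point projections to $\alpha$ advance by $\ell_{X_\ast}[G]$ per period (telescoping the projection parameters, using $\xi_{i+m}=G\xi_i$ and that $G$ shifts $\alpha$ by $\ell_{X_\ast}[G]$). For each $j$ the conjugate $\Phi_j:=w_{(j+1)L}w_{(j-1)L}^{-1}$ of $c_j$ carries $\xi_{(j-1)L}$ to $\xi_{(j+1)L}$, so the standard lower bound for stable translation length gives $\ell_{X_\ast}[c_j]=\ell_{X_\ast}[\Phi_j]\ge d_{X_\ast}\big(\xi_{(j-1)L},\xi_{(j+1)L}\big)-2\beta_j-O(\del)$, where $\beta_j$ is the Gromov product at $\xi_{(j-1)L}$ of $\xi_{(j+1)L}$ and $\Phi_j^{-1}\xi_{(j-1)L}$, i.e.\ the ``backtracking'' of $\Phi_j$ at $\xi_{(j-1)L}$. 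Summing over one period — each index counted twice, because consecutive blocks overlap in a segment of length $L$ — and bounding $\sum_j d_{X_\ast}(\xi_{(j-1)L},\xi_{(j+1)L})$ from below by $2\ell_{X_\ast}[G]-O(\del N)$ via the $\alpha$-projection telescoping, one gets $\sum_j\ell_{X_\ast}[c_j]\ge 2\ell_{X_\ast}[G]-O(\del N)-2\sum_j\beta_j$. The remaining, essential point is that $\sum_j\beta_j=O(\del N)$: equivalently, on average over the overlapping blocks the base point $o$ lies within $O(\del)$ of the minimal displacement set of $c_j$, so that the blocks realise their translation length efficiently as seen from $o$. I expect this to come out of $\del$-thinness of triangles — an excursion of the periodic path away from $\alpha$ must be ``repaid'', and the forced overlap of consecutive blocks keeps their axes near one another, so the Gromov products cannot accumulate — together with non-elementarity of the $X_\ast$-action, which excludes the degenerate situation in which every short sub-word of $G$ is elliptic while $G$ itself translates a long way. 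This estimate is the main obstacle; once it is in place the rest is the bookkeeping above.
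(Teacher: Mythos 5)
Your proposal takes a genuinely different route from the paper. The paper's proof has only a few lines: it identifies $\Dil(X_\ast, S)$ with the joint stable length $\frakD_{X_\ast}(S)$ (Proposition \ref{prop.dil=jsr}), observes that $\frakD_{X_\ast}(S) = \frakD_{X_\ast}(S^L)/L$, and then applies the Breuillard--Fujiwara inequality (Theorem \ref{thm.breufuji}) to the bounded set $S^L$, finishing with the observation that $\ell_S[s]\le 2L$ whenever $s\in S^{2L}$. You instead work directly: cut a geodesic word for $G=g^n$ into overlapping $S$-blocks $c_j$ of length $2L$, try to show some block has large $\ell_{X_\ast}[c_{j}]$, and pass to the limit in $n$. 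The bookkeeping in your reduction (the telescoping to $\ell_S[g]$, the pigeonhole, the edge cases $\ell_{X_\ast}[g]=0$ and $\ell_S[c]=0$) is essentially correct, and in fact you even end up with the slightly better error $K\delta/(2L)$.

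However, there is a genuine gap at the heart of the argument. Everything rests on the claimed inequality
\[
2\,\ell_{X_\ast}[G]\ \le\ \sum_{j=1}^{N}\ell_{X_\ast}[c_j]\ +\ K\delta N,
\]
and, within your sketch of it, on the claim that the Gromov-product losses satisfy $\sum_j \beta_j = O(\delta N)$. You do not prove either of these, and you say so explicitly (``This estimate is the main obstacle; once it is in place the rest is the bookkeeping above''). This is not a routine verification that can be waved at: it is precisely the hyperbolic-geometric content of the theorem. The orbit path $i\mapsto w_i\cdot o$ in $X_\ast$ is not a geodesic and is under no a priori constraint to progress monotonically along the quasi-axis of $G$; it can make long excursions, and a single $\beta_j$ can be as large as the length of such an excursion, not $O(\delta)$. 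Controlling the \emph{sum} $\sum_j\beta_j$ by $O(\delta N)$ therefore requires a real argument that the excursions cancel on average, and non-elementarity does not obviously supply this (non-elementarity of the action is used in the paper only through Lemma \ref{lem.gam_x}/Proposition \ref{prop.dil=jsr}, to relate displacement to translation length, not to rule out excursions). Note also that what you need is \emph{stronger} than Breuillard--Fujiwara: their theorem gives a lower bound on $\sup_{s\in S^{2L}}\ell_{X_\ast}[s]$ over \emph{all} of $S^{2L}$, whereas you need the maximum over the specific subwords $c_j$ of $G$ (indeed, an average over them) to be large. That sharper statement may well be true, but it would need a proof along the lines of \cite{b.f} rather than a sketch, and as written the argument does not establish the theorem.
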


\begin{remark}
    In the theorem above the set $S$ is not required to be finite or symmetric.
\end{remark}

 %We also note that the property \eqref{eq:propertypositiveMLSword} is independent of the finite generating set $S$.   

Many groups admit non-elementary actions on hyperbolic spaces. %\eqref{eq:propertypositiveMLSword}.
Examples include:
\begin{enumerate}
    \item Non-elementary hyperbolic groups and relatively hyperbolic groups.
    \item Mappings class groups of most oriented finite type surfaces acting on the curve graph \cite{mazur-minsky}.
    \item Outer automorphism groups of free groups acting on the free factor complex \cite{bestvina-feighn.hyp}.
    \item Non-elementary $\CAT(0)$ groups with rank-1 elements \cite{sisto} (see also \cite[Thm.~K]{PSZ}). 
\end{enumerate}

Moreover, many of these groups admit cobounded isometric actions on geodesic metric spaces besides Cayley graphs. For example, mapping class groups act coboundedly on injective metric spaces coming from the hierarchical structure \cite{HHP.coarse,petyt-zalloum}. In addition, if $\G$ is a group acting geometrically on a $\CAT(0)$ cube complex $X$, then $\G$ acts coboundedly on $X$ equipped with any $\ell^p$ metric for $1\leq p\leq \infty$ (see e.g.~\cite{HHP.lp}).

%\begin{definition}
%Let $\G$ be a group acting isometrically on the metric space $X$. We say that this action has the \emph{Abel-Margulis-Soifert property} if there exist a constant $C>0$, a point $x\in X$ and a finite set $F\subset \G$ such that for any $g\in \G$ we have
%\begin{equation}\label{eq:AMSproperty}
%    d_X(gx,x)\leq \max_{f\in F}{\ell_X[gf]}+C.
%\end{equation}
%\end{definition}

\subsection{Anosov represenations}\label{sec.introanosov}
Our methods are sufficiently flexible that we can apply them to deduce approximate rigidity results for Anosov representations. A representation $\rho: \G \ra \PSL_m(\R)$ is \emph{projective Anosov} (or equivalently, \emph{$1$-dominated} \cite{BPS}) if given a finite generating set $S$ for $\G$ there exist $C,\mu >0$ such that
\begin{equation}\label{eq.defAnosov}
\frac{\sigma_1(\rho(g))}{\sigma_2(\rho(g))} \ge C e^{\mu |g|_S} \ \text{ for all $g\in\G$,}
\end{equation}
where $\sig_i(A)$ represents the $i$th largest singular value of the matrix $A$. See \cite{BPS} for a more detailed introduction to projective Anosov representations.

If $\rho:\G \ra \PSL_m(\R)$ and $\tau: \G \ra \PSL_k(\R)$ are projective Anosov representations, then their dilation is given by 
\begin{equation*}
    \Dil(\rho,\tau):=\sup_{[g]\in \conj'}{\frac{\log \lam_1(\rho(g))}{\log\lam_1(\tau(g))}},
\end{equation*}
where $\lam_1(A)$ denotes the spectral radius of the matrix $A$. We note that the dilation is well-defined. Indeed, we have that $\Dil(\rho,\tau)=\Dil( \G_\rho,\G_\tau)$ where $\G_\rho=(\G,d_\rho)$ and $d_\rho(g,h)=\log \sig_1(g^{-1}h)$, and similarly for $\G_\tau$ (note that $d_\rho$ is possibly asymmetric). For two projective Anosov representations of arbitrary dimension we prove the following, extending Corollary \ref{coro.qF}.

\begin{theorem}\label{thm.approxLSRAnosov}
 For any $m\geq 0$ there exist positive constants $c_m\leq 8\log 2+5 \log m$ and $d_m\leq 2m^3$ with $d_m$ an integer such that the following holds. Suppose $\rho:\G \ra \PSL_m(\R)$ and $\tau: \G \ra \PSL_k(\R)$ are projective Anosov representations (with $m$ not necessarily equal to $k$). Then there exists $\beta>1$ depending only on $\tau$ such that if $L>d_m\beta$ then
\begin{equation*}
    \Dil(\rho,\tau) \leq \frac{c_md_m}{(L-d_m\beta)}+\left(\sup_{0<\log\lam_1(\tau(g))\leq L }{\frac{\log\lam_1(\rho(g))}{\log \lam_1(\tau(g))}} \right) \cdot \left(\frac{L}{L-d_m\beta}\right).
\end{equation*}
\end{theorem}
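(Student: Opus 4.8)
The plan is to deduce the theorem from Theorem~\ref{thm.approxLSRvscobounded} (or a mild variant of its proof) applied to suitable geodesic models of the metric spaces $\G_\rho$ and $\G_\tau$. First note that, since $\sig_1(A)=\|A\|$ satisfies $\sig_1(AB)\le\sig_1(A)\sig_1(B)$ and $\sig_1(A)\ge 1$ for $A\in\SL$, the function $d_\rho(g,h)=\log\sig_1(\rho(g^{-1}h))$ is a genuine (possibly asymmetric) metric on $\G$, and Gelfand's spectral radius formula gives $\ell_{\G_\rho}[g]=\lim_n\frac1n\log\sig_1(\rho(g)^n)=\log\lam_1(\rho(g))$, and likewise $\ell_{\G_\tau}[g]=\log\lam_1(\tau(g))$. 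Hence $\Dil(\rho,\tau)=\Dil(\G_\rho,\G_\tau)$ and the supremum in the statement is exactly $\sup\{\ell_{\G_\rho}[g]/\ell_{\G_\tau}[g]\colon 0<\ell_{\G_\tau}[g]\le L\}$, which is the quantity controlled by Theorem~\ref{thm.approxLSRvscobounded} once appropriate spaces are in hand.

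For $\rho$ in the role of the $\del$-hyperbolic target: since $\rho$ is projective Anosov, $\G$ is word hyperbolic and $\rho$ carries transverse continuous limit maps $\x^1\colon\partial\G\to\bbP(\R^m)$ and $\x^{m-1}\colon\partial\G\to\bbP((\R^m)^\ast)$. I would use these to equip $\G$ (equivalently $\partial\G$) with a Gromov product / cross-ratio and pass to an associated geodesic metric space $Y_\rho$; the key point is that $\G_\rho$ lies within bounded distance of such a $Y_\rho$, so that $Y_\rho$ is $\del_m$-hyperbolic and carries a non-elementary isometric $\G$-action with $\ell_{Y_\rho}[g]=\log\lam_1(\rho(g))$ for all $[g]$ (stable translation lengths being insensitive to bounded perturbations). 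The constant $\del_m$, and hence $c_m\le 8\log 2+5\log m$ and $d_m\le 2m^3$, would be extracted from the cross-ratio geometry of $\bbP(\R^m)$ together with the estimates on the exterior powers $\L^i\rho$ needed to quantify the singular-value gap. This is the technical crux and the main obstacle: one must realize $\log\lam_1(\rho(\cdot))$ \emph{exactly} as a stable translation-length spectrum of an honest hyperbolic geodesic space while bounding its hyperbolicity constant explicitly in terms of $m$ alone (not in terms of the particular $\rho$), which calls for the fine Anosov machinery rather than the soft coarse geometry used elsewhere in the paper.

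For $\tau$ in the role of the $D$-cobounded geodesic space the situation is softer: $\G$ acts on $\G_\tau$ by left translation, which is transitive, so coboundedness is automatic; the only defect of $\G_\tau$ is that it is not geodesic, and because $\tau$ is projective Anosov, $d_\tau$ is roughly geodesic with rough constants governed by a single $\beta=\beta(\tau)>1$. Replacing $\G_\tau$ by a geodesic thickening $\wh X_\tau$ — after the rescaling that aligns it with the normalization of $Y_\rho$, which is what introduces the factor $d_m$ — yields a genuine possibly-asymmetric geodesic metric space with $\ell_{\wh X_\tau}[g]=\log\lam_1(\tau(g))$ on which $\G$ acts $D$-coboundedly with $D$ comparable to $\beta$. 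Finally, applying Theorem~\ref{thm.approxLSRvscobounded} to $X_\ast=Y_\rho$ and $X=\wh X_\tau$ with $\del\rightsquigarrow\del_m$ and $6D\rightsquigarrow d_m\beta$, and translating the conclusion back through $\ell_{Y_\rho}[g]=\log\lam_1(\rho(g))$ and $\ell_{\wh X_\tau}[g]=\log\lam_1(\tau(g))$, produces the stated inequality after collecting the numerical factors into $c_m$ and $d_m$; the only genuinely delicate ingredient remains the quantitative hyperbolic model of $\G_\rho$ built in the second step.
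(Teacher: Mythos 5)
Your proposal has a genuine gap, and it is exactly the obstruction the paper explicitly flags. You propose to deduce the theorem by applying Theorem~\ref{thm.approxLSRvscobounded} with $\rho$ in the role of the $\delta$-hyperbolic target, and to do so you want to build a \emph{symmetric} geodesic $\delta_m$-hyperbolic space $Y_\rho$ carrying a nonelementary $\G$-action with $\ell_{Y_\rho}[g]=\log\lam_1(\rho(g))$ for all $[g]$. Such a space cannot exist in general: for any $\G$-invariant \emph{symmetric} metric one automatically has $\ell[g]=\ell[g^{-1}]$, whereas for a projective Anosov $\rho:\G\to\PSL_m(\R)$ with $m\ge 3$ one typically has $\lam_1(\rho(g))\neq \lam_1(\rho(g^{-1}))=\lam_1(\rho(g))^{-1}\cdot(\text{product of other eigenvalue moduli})$, so $\log\lam_1(\rho(\cdot))$ is not an inverse-invariant conjugacy function. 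The same asymmetry rules out $\G_\rho$ lying at bounded distance from a symmetric space, since that would force $|d_\rho(g,h)-d_\rho(h,g)|$ to be uniformly bounded, which it is not for $m\ge 3$. You flag this step as ``the technical crux and the main obstacle,'' but it is not merely technical: it fails. Theorem~\ref{thm.approxLSRvscobounded} (and the Breuillard--Fujiwara input behind it) genuinely requires $X_\ast$ to be an honest metric space, as the remark after its statement emphasizes.

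The paper's actual route sidesteps this entirely. It does \emph{not} apply Theorem~\ref{thm.approxLSRvscobounded}; instead it proves the more general Theorem~\ref{thm.approxmatrices}, in which the Breuillard--Fujiwara bound on the joint stable length in a hyperbolic space is replaced by Bochi's inequality (Theorem~\ref{thm.bochi}) on the joint spectral radius $\frakR(S)$ of a bounded set of $m\times m$ matrices, applied to $S=\rho(S_n)$ where $S_n$ is the word-metric approximation of $\psi^\tau$ coming from Lemma~\ref{lem.HDLFapproxword}(2) (your treatment of the $\tau$ side via rough geodesicity is in the right spirit). The constants $c_m\le 8\log 2+5\log m$ and $d_m\le 2m^3$ then come directly from Bochi's theorem and Breuillard's quantitative refinement of it, not from cross-ratio geometry of $\bbP(\R^m)$ or estimates on exterior powers, and in particular there is no hyperbolicity constant $\delta_m$ to extract. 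So the correct fix to your argument is to abandon the symmetric model $Y_\rho$ altogether and run the proof of Theorem~\ref{thm.approxLSRvscobounded} with Bochi in place of Breuillard--Fujiwara, operating throughout with the (asymmetric) hyperbolic distance-like function $\psi^\rho$.
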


\subsection{Ideas behind the proofs}
The key ideas behind our proofs are as follows. To obtain bounds on the dilation $\Dil(X_\ast,X)$ as in Theorem \ref{thm.approxLSRvscobounded} we first obtain bounds on dilations of the form $\Dil(X_\ast,\Cay(\G, S))$ (as in Theorem \ref{thm.approxLSRvswordmetric}) where $S \subset \G$ generates $\G$ as a semigroup. To do this we identify $\Dil(X, \Cay(\G,S))$ with the \emph{joint stable length}
\[
\frakD_X(S):=\lim_{n\to \infty}{\frac{\sup_{s_1,\dots,s_n\in S}d_{X_\ast}(x,s_1\cdots s_n x)}{n}}.
\]
The quantity $\frakD_X(S)$ is a geometric version of the well-known joint spectral radius associated to products of matrices. This identification is useful as it allows us to apply an inequality of Breuillard and Fujiwara (see Theorem \ref{thm.breufuji}) that gives us uniform control over $\frakD_{X_\ast}(S)$ (and hence $\Dil(X_\ast,\Cay(\G, S))$) in terms of $\sup_{s \in S^2}\ell_{X_\ast}[s]$. We then use the fact that the displacement function $d_{X}$ associated to the action of $\G$ on $X$ can be approximated by word metrics (see Lemma \ref{lem.HDLFapproxword}). This allows us to obtain bounds on $\Dil(X_\ast,X)$ using the inequality
\[
\Dil(X_\ast,X) \le \Dil(X_\ast, \Cay(\G,S)) \, \Dil( \Cay(\G,S),X)
\]
along with an appropriate choice of $S$ and the arguments mentioned above. We follow this line of reasoning to prove both Theorem \ref{thm.approxLSRvscobounded} and \ref{thm.approxLSRvswordmetric}. To deduce Theorem \ref{thm.approxLSRAnosov} we follow a similar argument but replace the use of the Breuillard-Fujiwara inequality with an inequality due to Bochi (Theorem \ref{thm.bochi}) for matrix products. In the last section we deduce Theorem \ref{thm.butt} from Theorem \ref{thm.approxLSRvscobounded}. This amounts to realising the coarse geometric constants that appear in Theorem \ref{thm.approxLSRvscobounded} in terms of the curvature and injectivity radius constants comings from the Riemannian manifold.

\subsection*{Organisation}
The article is organised as follows. In Section \ref{sec.prelim} we introduce preliminary material concerning metric spaces and groups acting on them. We then in Section \ref{sec.qr} prove the theorems presented in sections \ref{sec.introhyperbolic} and \ref{sec.introanosov}. In Section \ref{sec.buttthm} we deduce Theorem \ref{thm.butt}.

\subsection*{Acknowledgements}
We are grateful to Karen Butt, Ralf Spatzier for helpful comments and suggestions, and we are indebted to Abdul Zalloum for careful reading and feedback on an earlier draft.
%\tred{Write}

%%%%%%%%%%%%%%%%%%%%%%%%%%%%%%%%%%%%%%%%%%%%%%%%%%%%%%%%%%%%%%%%%%%%%%%%%%%%%%%%%%%%%%%%%%%%%%%%%%%%%%%%%%%%%%%%%%%%%%%%%%%%%%%%%%%%%%%%%%%%%%%%%%%%%%%%%%%%%%%%%%%%%%%%%%%%%%%%%%

\section{Preliminaries}\label{sec.prelim}

\subsection{Pseudo metric spaces} \label{sec.hyp}
Recall that a \emph{metric} $d$ on the space $X$ is a function $d:X \times X \ra \R$ satisfying: 
   \begin{enumerate}
        \item point separation: $d(x,y)=0$ implies $x=y$;
        \item positivity: $d(x,y)\geq 0$ for all $x,y\in X$, and $d(x,x)=0$ for all $x\in X$;
        \item the triangle inequality: $d(x,z)\leq d(x,y)+d(y,z)$ for all $x,y,z\in X$; and,
        \item symmetry: $d(x,y)=d(y,x)$ for all $x,y\in X$.
    \end{enumerate}
If $d$ only satisfies (2),(3), and (4) above then it is called a \emph{pseudo metric}, and $d$ is a \emph{possibly non-symmetric} metric (resp. pseudo metric) if it satisfies (1), (2) and (3) (resp. (2) and (3)), but not necessarily (4). 

We say that two functions $\phi, \psi : X \times \G \to \R$ are \emph{roughly similar} if there exist constants $\tau, C>0$ such that
\[
|\phi(x,y) - \tau \psi(x,y)| \le C \ \text{ for all $x,y \in \G$}.
\]
If $\psi, \phi$ are roughly similar with $\tau =1$ we say that they are \emph{roughly isometric}. In a similar way, we say that $\phi$ and $\psi$ are \emph{quasi-isometric} if there exists $\lam>1$ such that 
\[\lam^{-1}\psi(x,y) -\lam \leq \phi(x,y)\leq \lam \psi(x,y)+\lam \text{ for all} x,y\in X.\]
Note that roughly similar functions are necessarily quasi-isometric.

The \emph{Gromov product} of a function $\psi: X\times X \ra \R$ is the assignment 
\[(x|y)^\psi_z:=\frac{1}{2}(\psi(x,z) + \psi(z,y) - \psi(x,y)) \text{ for $x,y,z \in X$}. \]
A possibly asymmetric pseudo metric $d$ on $X$ is called $\del$-\emph{hyperbolic} ($\del\geq 0$) if
\[
(x|y)^d_z \ge \min\{(x|w)^d_z, (y|w)^d_z \} - \delta \text{ for all} x,y,z,w \in X,\]
and $d$ is \emph{hyperbolic} if it is $\del$-hyperbolic for some $\del$. 

\subsection{Group actions and hyperbolicity}

If $X=(X,d_X)$ is a possibly asymmetric pseudo metric space and $\G$ is a group with identity element $o$, an action of $\G$ on $X$ is \emph{isometric} if $d_X(gx,gy)=d_X(x,y)$ for all $x,y\in X$ and $g\in \G$. In this case, the \emph{stable translation length} function of this action is given by
\begin{equation*}
    \ell_X[g]=\ell_{d_X}[g]:=\lim_{n\to \infty}{\frac{1}{k}d_X(x,g^n x)} \hspace{2mm} \text{for } [g]\in \conj,
\end{equation*}
which is well-defined by subadditivity and independent of the base point $x\in X$. 

As in the introduction, the \emph{dilation} of two isometric actions of $\G$ on the possibly asymmetric pseudo metric spaces $X, X_\ast$ is then the infimum
\[
\Dil(X_\ast,X):=\inf \{\eta >0 \hspace{1mm} \colon \hspace{1mm} \ell_{X_\ast}[g]\leq \eta \cdot \ell_{X}[g] \text{ for all }[g]\in \conj \} \in [0,+\infty]
\]
and declare $\Dil(X_\ast,X)$ to be $+\infty$ if no such $\eta$ exists. When $X=(X,d), X_\ast=(X,d_\ast)$ and $X$ is understood, we often write $\Dil(d_\ast,d)=\Dil(X_\ast,X)$.

Note that $\Dil(d_\ast,d)$ is always finite when $d$ and $d_\ast$ are quasi-isometric, possibly asymmetric pseudo metrics on a given space $X$. In addition, we have the submultiplicative property 
\[
\Dil(X_{\ast \ast},X)\leq \Dil(X_{\ast\ast},X_\ast)\Dil(X_{\ast},X)
\]
valid for all isometric actions of $\G$ on the possibly asymmetric pseudo metric spaces $X,X_\ast,X_{\ast\ast}$.

Natural pseudo metrics on groups are word metrics. If $S$ is a (non-necessarily symmetric) set of the group $\G$ that generates it as a semigroup, then the right word metric
$d_S(g,h):=|g^{-1}h|_S$ is a possibly asymmetric metric on $\G$. We abbreviate the stable translation length function $\ell_{d_S}$ by $\ell_S$ and if $\G$ also acts isometrically on $X$, we use the notation $\Dil(X,S)=\Dil(X,d_S)$ and $\Dil(S,X)=\Dil(d_S,X)$. A similar notation is used for the dilation comparing $d_S$ and left-invariant, possibly asymmetric pseudo distances on $\G$.

A group $\G$ is \emph{non-elementary hyperbolic} if it is non-virtually cyclic, finitely generated, and any word metric $d_S$ on $\G$ associated to a finite, symmetric generating set $S$ is hyperbolic. Under this assumption we write $\Dc_\G$ for the collection of pseudo metrics on $\G$ that are hyperbolic, quasi-isometric to a word metric (for a finite generating set), and $\G$-invariant. The \emph{space of metric structures} $\scrD_\G$ is the quotient of $\calD_\G$ under rough similarity. There is a natural metric $\Del$ on $\scrD_\G$ given by 
\begin{equation}\label{eq.defDel}
\Del([d],[d_*]):=\log  \left(\Dil(d,d_*) \Dil(d_*,d) \right)
\end{equation}
where $[d]$ is the rough similarity equivalence class of $d\in \calD_\G$, see \cite{reyes}.

\begin{definition}
    If $\G$ is non-elementary hyperbolic, a \emph{hyperbolic distance-like function} on $\G$ is a possibly asymmetric pseudo metric  $\psi:\G \times \G \ra \R$ that is $\G$-invariant and satisfies the following. For any $d\in \calD_\G$ and $C\geq 0$ there exists $D\geq 0$ such that $(x|y)_{w}^{d}\leq C$ implies $(x|y)_w^\psi \leq D$ for all $x,y,w\in \G$.
\end{definition}
A related notion to hyperbolic-like functions is the family of $\G$-invariant \emph{tempered potentials} that was considered by Cantrell-Tanaka \cite[Def.~2.5]{cantrell.tanaka.1}. These are $\G$-invariant functions satisfying property (4) above and such that their Gromov products are `stable at infinity'. 

It is immediate that pseudo metrics belonging to $\calD_\G$ are hyperbolic distance-like functions. Also, if $S$ is a non-necessarily symmetric finite generating set of $\G$, then the word metric $d_S$ is a hyperbolic distance-like function.
Similarly, if $\rho:\G \ra \PSL_{m}(\R)$ is a projective Anosov representation (recall \eqref{eq.defAnosov}), then
\[
\psi^\rho(g,h):=\log \sig_1(g^{-1}h) \ \text{ for $g,h \in \G$}
\]
is a hyperbolic distance-like function \cite[Lem.~3.10]{cantrell.reyes}. These examples are all quasi-isometric to pseudo metrics in $\Dc_\G$. However, hyperbolic distance-like functions need not be proper. 

\begin{definition}
    For $\G$ non-elementary hyperbolic, let $\ov\Dc_\G$ be the set of all left-invariant pseudo metrics on $\G$ that have non-constant translation length function and such that there exist $\lambda >0$ and $ d_\ast\in \Dc_\G$ satisfying $(g|h)_o^d \le \lambda(g|h)_o^{d_\ast} + \lambda$ for all $g,h \in \G$. We write $\ov\scrD_\G$ for the quotient of $\ov\Dc_\G$ under the equivalence of rough similarity. Points in $\ov\scrD_\G$ are called \emph{metric structures}.
\end{definition}
 It follows immediately from the definition that elements in $\ov\Dc_\G$ are hyperbolic distance-like functions and that there is a natural inclusion $\scrD_\G \subset \ov \scrD_\G$.

 If $\G$ acts isometrically on the metric space $X$ and $x\in X$ then $d_X^x(g,h):=d_X(gx,hx)$ defines a left-invariant pseudo metric on $\G$. If this pseudo metric belongs to $\ov\calD_\G$ for some (equivalently, for any) $x\in X$, we abuse notation and let $[X]$ denote the metric structure in $\ov\scrD_\G$ induced by $d_X^x$. This metric structure is independent of the point $x$.
 
 It was shown in \cite[Thm.~1.6]{cantrell.reyes} that the following actions induce points in $\ov\scrD_\G$. 
\begin{enumerate}
    \item Actions on coned-off Cayley graphs for finite, symmetric generating sets, where we cone-off a finite number of quasi-convex subgroups of infinite index.
    \item Non-trivial Bass-Serre tree actions with quasi-convex vertex stabilizers of infinite index.
    \item Non-trivial small actions on $\R$-trees, when $\G$ is a surface group or a free group.
\end{enumerate}
Pseudo metrics corresponding to these actions need not be proper, i.e. given $d \in \ov\Dc_\G$ is possible that there is $T >0$ such that the set $\{g \in \G : d(o,g)  \leq T\}$ is infinite. 

We will need two properties about hyperbolic distance-like functions. The first one is that these functions are always roughly geodesic.
\begin{definition}
If $X$ is a non-empty set, a non-negative function $\psi:X \times X \ra \R$ is \emph{$\al$-roughly geodesic} ($\al\geq 0$) if for all $x,y\in X$ there exists a sequence $x=x_0,\dots,x_n=y$ in $X$ such that
$$j-i-\al \leq \psi(x_i,x_j)\leq j-i+\al \text{ for all }0\leq i\leq j \leq n.$$
\end{definition}

\begin{proposition}[{\cite[Lem.~3.2]{cantrell.reyes}}]\label{prop:RGforHDLF} 
    If $\G$ is non-elementary hyperbolic and $\psi:\G \times \G \ra \R$ is a hyperbolic distance-like function, then there exists some $\al\geq 0$ such that $\psi$ is $\al$-roughly geodesic.
\end{proposition}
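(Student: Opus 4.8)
The plan is to prove that a hyperbolic distance-like function $\psi$ is roughly geodesic by comparing it with a genuine word metric $d_S$ on $\G$ for a finite symmetric generating set $S$, which is honestly geodesic, and then transferring this property across via the defining Gromov-product comparison. First I would fix a finite symmetric generating set $S$ of $\G$, so that $d_S\in\calD_\G$, and recall that $(\G,d_S)$ is $0$-roughly geodesic (indeed geodesic, via edge-paths in the Cayley graph). Given $x,y\in\G$, take a geodesic word path $x=x_0,x_1,\dots,x_n=y$ with $d_S(x_i,x_j)=|j-i|$ for all $i\le j$. The goal is to show this same sequence (or a mild modification of it) witnesses the $\al$-rough-geodesic property for $\psi$, for a uniform $\al$ depending only on $\G$ and $\psi$.

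The key step is to control $\psi(x_i,x_j)$ in both directions. For the upper bound, since $\psi$ is $\G$-invariant and $S$ is finite, the quantity $M:=\max_{s\in S}\max\{\psi(o,s),\psi(s,o)\}$ is finite, and subadditivity (the triangle inequality for $\psi$) gives $\psi(x_i,x_j)\le M\,(j-i)$; but this is only a quasi-geodesic bound, not a rough-geodesic one, so I would instead argue as follows. By hypothesis, applied with $d=d_S$ and the constant $C=0$ (or $C=\delta_S$), there is $D\ge 0$ such that $(u|v)^{d_S}_w\le C$ implies $(u|v)^\psi_w\le D$ for all $u,v,w$. Along the geodesic, the Gromov products $(x_i|x_j)^{d_S}_{x_k}$ for $i\le k\le j$ vanish (the path is geodesic), hence $(x_i|x_j)^\psi_{x_k}\le D$. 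Unwinding the definition of the Gromov product, $(x_i|x_j)^\psi_{x_k}\le D$ says
\[
\psi(x_i,x_k)+\psi(x_k,x_j)-\psi(x_i,x_j)\le 2D,
\]
i.e.\ $\psi$ is ``additive along the path up to $2D$'' — concatenation does not lose more than $2D$ at each intermediate vertex. Combined with the one-step bounds $\psi(x_i,x_{i+1})\le M$ (and a lower bound $\psi(x_i,x_{i+1})\ge m$ for some $m$, which I'd need to extract, e.g.\ from the fact that $\psi$ has non-constant translation length and is quasi-isometric to a word metric, giving $m>0$ after possibly rescaling, or more carefully using that $\psi$ is quasi-isometric to $d_S$), a standard telescoping/induction argument yields $c_1|j-i|-\al\le\psi(x_i,x_j)\le c_2|j-i|+\al$. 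The remaining point is that the rough-geodesic definition requires the coefficient of $|j-i|$ to be exactly $1$; this is achieved by reparametrising — subdividing or coarsening the vertex sequence so that consecutive $\psi$-distances are $\approx 1$ — which is possible precisely because $\psi$ is roughly geodesic-able once we know it is quasi-isometric to $d_S$ and satisfies the $2D$-additivity above. Alternatively, and more cleanly, one invokes the general principle (Bonk–Schramm / Väisälä type) that a quasi-geodesic pseudo-metric whose Gromov products are preserved from a geodesic one is itself roughly geodesic, after rescaling the metric.

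I expect the main obstacle to be the passage from the \emph{quasi-geodesic} estimate $c_1|j-i|-\al\le\psi(x_i,x_j)\le c_2|j-i|+\al$ to an honest \emph{roughly geodesic} one with unit speed: one must produce a new intermediate sequence along which $\psi$-increments are within $\al$ of $1$ at every scale, and verify the two-sided bound $j-i-\al\le\psi(x_i',x_j')\le j-i+\al$ for \emph{all} pairs $i\le j$, not just consecutive ones. The tool here is the $2D$-additivity along the old path (from the hyperbolicity comparison) which guarantees that inserting or deleting vertices only perturbs pairwise $\psi$-distances by a bounded amount, so a greedy selection of a subsequence with increments close to $1$ works; tracking that the accumulated error stays uniformly bounded (independent of $x,y$ and of $n$) is the technical heart. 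Since this is exactly \cite[Lem.~3.2]{cantrell.reyes}, which we are permitted to cite, in the paper itself this is simply quoted; the sketch above indicates the route one would take to reprove it.
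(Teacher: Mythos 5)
Your core route is correct and almost certainly matches the cited \cite[Lem.~3.2]{cantrell.reyes}: take a $d_S$-geodesic $x=x_0,\dots,x_n=y$ for a finite symmetric generating set $S$, apply the Gromov-product preservation hypothesis with $C=0$ to get $(x_0|x_j)^\psi_{x_i}\le D$ for all $0\le i\le j\le n$ (equivalently $\psi(x_0,x_i)+\psi(x_i,x_j)\le\psi(x_0,x_j)+2D$), and then reparametrise by greedily selecting a subsequence along which $\psi$-increments are close to $1$. The one genuine error is the claimed positive lower bound $\psi(x_i,x_{i+1})\ge m>0$ and, more broadly, the assertion that $\psi$ is quasi-isometric to $d_S$. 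Neither is part of the definition of a hyperbolic distance-like function, and the paper explicitly emphasises that such functions \emph{need not be proper}: coned-off Cayley graphs and Bass--Serre tree actions yield hyperbolic distance-like $\psi$ with $\psi(o,s)=0$ for some $s\in S$. So there is no such $m$, the $d_S$-geodesic is in general \emph{not} a $\psi$-quasi-geodesic, and the proposed ``Bonk--Schramm / V\"ais\"al\"a after rescaling'' shortcut does not apply.

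Fortunately the intermediate quasi-geodesic step is unnecessary, and the greedy selection you correctly identify as the technical heart works directly without it. Set $t_i:=\psi(x_0,x_i)$. Then $t_{i+1}\le t_i+M$ with $M:=\max_{s\in S}\psi(o,s)<\infty$ by $\G$-invariance and the triangle inequality, and the Gromov-product bound combined with $\psi\ge 0$ gives $t_i\le t_j+2D$ for $i\le j$ (so $t$ is nondecreasing up to $2D$). For each integer $k\ge 0$ let $i_k$ be the least index with $t_{i_k}\ge k$; minimality forces $k\le t_{i_k}< k+M$. For $l\le k$, the bound $\psi(x_{i_l},x_{i_k})\le t_{i_k}-t_{i_l}+2D$ together with the triangle inequality $\psi(x_{i_l},x_{i_k})\ge t_{i_k}-t_{i_l}$ then yields $|\psi(x_{i_l},x_{i_k})-(k-l)|<M+2D$; appending $y=x_n$ as the final term costs a further bounded error. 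Hence the selected subsequence witnesses $\al$-rough geodesicity with $\al$ depending only on $D$ and $M$ (hence only on $\psi$ and $S$), and no lower bound on increments is ever needed --- portions of the $d_S$-geodesic where $\psi$ does not increase are simply skipped by the selection.
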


The second property is that hyperbolic distance-like functions can be approximated by word metrics. This assertion also holds for arbitrary cobounded isometric actions on roughly geodesic spaces. Part (1) of the next result follows exactly as in the proof of \cite[Lem.~5.1]{reyes} which shows that pseudo metrics in $\Dc_\G$ can be approximated by word metrics (the assumption of hyperbolicity is not needed), whereas part (2) is exactly \cite[Lem.~4.6]{BCGS} after noting that the assumptions of properness and symmetry are unnecessary. We leave the details to the reader. 
\begin{lemma}\label{lem.HDLFapproxword} %Let $\psi$ be a hyperbolic distance-like function that is $\alpha$-roughly geodesic. For $n>\alpha+1$, let $S_n:=\{x \in \Gamma \mid \psi(o, x) \leq n\}$. Then $S_n$ generates $\Gamma$ as a semigroup and for all $x \in \Gamma$ we have
Let $\G$ act isometrically on the possibly asymmetric metric space $X$ and fix $x\in X$. 
\begin{enumerate}
    \item Suppose $X$ is geodesic and the action of $\G$ is $D$-cobounded. For an integer $n\geq 1$, let $S_n=\{g \in \G \mid d_X(x, gx) \leq (n+2)D\}$. Then $S_n$ generates $\G$ as a semigroup and for all $g\in \G$ we have 
    $$ nD|g|_{S_n}-nD \leq d_X(x,gx) \leq (n+2)D|g|_{S_n}. $$
    \item Suppose $X=(\G,\psi)$ where $\G$ acts by the left on $X$ and $\psi$ is $\al$-roughly geodesic. For $n>\alpha+1$, let $S_n:=\{g \in \G \mid \psi(x, gx) \leq n\}$. Then $S_n$ generates $\G$ as a semigroup and for all $g \in \G$ we have
    $$ (n-\alpha-1)|g|_{S_n}-(n-1) \leq \psi(x,gx) \leq n|g|_{S_n}. $$
\end{enumerate}
%$$ (n-\alpha-1)|x|_{S_n}-(n-1) \leqslant \psi(o, x) \leq n|x|_{S_n}.$$
\end{lemma}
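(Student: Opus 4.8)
The plan is to prove both parts by the same two-step scheme: the upper bounds will come straight from the triangle inequality along a shortest $S_n$-word, while the lower bounds — together with the assertion that $S_n$ generates $\G$ as a semigroup — will come from cutting a (rough) geodesic in $X$ into pieces of controlled size and transporting those pieces back into $\G$, using $D$-coboundedness in part (1) and the $\al$-rough geodesicity of $\psi$ in part (2). In both cases the degenerate case $g = o$ (and, in part (1), $gx = x$) is checked separately and is immediate. Note that nowhere will hyperbolicity, properness, or symmetry be used beyond what is assumed.

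For part (1), the upper bound: writing $g = s_1\cdots s_k$ with $s_i \in S_n$ and $k = |g|_{S_n}$, I would telescope $d_X(x,gx) \le \sum_{i=1}^k d_X(s_1\cdots s_{i-1}x,\, s_1\cdots s_i x)$ and use that the action is isometric to rewrite the $i$th term as $d_X(x, s_i x) \le (n+2)D$, giving $d_X(x,gx) \le (n+2)D\,|g|_{S_n}$. For the lower bound, fix $g$, set $T = d_X(x,gx)$, and choose a geodesic from $x$ to $gx$; along it I would pick points $x = p_0, p_1,\dots,p_m = gx$ in order with $d_X(p_{j-1},p_j)\le nD$ and $m \le T/(nD) + 1$ (keeping $m\ge 1$ even when $T=0$). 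For each $j$ I would use $D$-coboundedness on the pair $(x,p_j)$ to produce $g_j\in\G$ with $\max\{d_X(g_jx,p_j),\, d_X(p_j,g_jx)\}\le D$, taking $g_0 = o$ and $g_m = g$. Then, by isometry of the action and the triangle inequality,
\[
d_X(x,\, g_{j-1}^{-1}g_j x) = d_X(g_{j-1}x,\, g_j x) \le d_X(g_{j-1}x,p_{j-1}) + d_X(p_{j-1},p_j) + d_X(p_j,g_jx) \le (n+2)D,
\]
so $g_{j-1}^{-1}g_j\in S_n$, and $g = (g_0^{-1}g_1)(g_1^{-1}g_2)\cdots(g_{m-1}^{-1}g_m)$ exhibits $g$ as a product of $m$ elements of $S_n$. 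This simultaneously shows that $S_n$ generates $\G$ as a semigroup and that $|g|_{S_n}\le m \le d_X(x,gx)/(nD) + 1$, which rearranges to $nD|g|_{S_n} - nD \le d_X(x,gx)$.

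For part (2), I would first replace $\psi$ by $\psi_x(g,h):=\psi(gx,hx)$, which is again a left-invariant, $\al$-roughly geodesic pseudo metric on $\G$ — rough geodesicity is preserved since right translation by $x$ is a bijection of $\G$ — and for which $S_n = \{g : \psi_x(o,g)\le n\}$; this reduces everything to the case $x = o$. The upper bound is again a telescoping argument, now using left-invariance and $\psi(o,s)\le n$ for $s\in S_n$, yielding $\psi(o,g)\le n\,|g|_{S_n}$. For the lower bound I would take an $\al$-rough geodesic $o = y_0,\dots,y_N = g$; then $\psi(y_0,y_N)\ge N - \al$ gives $N\le\psi(o,g)+\al$. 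Setting $m := \lfloor n-\al\rfloor$ (so $1\le m\le n-\al$ because $n>\al+1$) and selecting $y_{i_0},\dots,y_{i_r}$ with $i_0 = 0$, $i_r = N$, consecutive gaps at most $m$, and $r\le N/m + 1$, rough geodesicity gives $\psi(y_{i_{l-1}},y_{i_l})\le m+\al\le n$, so $y_{i_{l-1}}^{-1}y_{i_l}\in S_n$ and $|g|_{S_n}\le r\le (\psi(o,g)+\al)/m + 1$. Using $m\ge n-\al-1>0$,
\[
(n-\al-1)\,|g|_{S_n} \le (n-\al-1)\Bigl(\tfrac{\psi(o,g)+\al}{m} + 1\Bigr) \le \psi(o,g) + \al + (n-\al-1) = \psi(o,g) + n - 1,
\]
which is the claimed inequality, and writing $g$ as a product of $S_n$-elements again shows $S_n$ generates $\G$ as a semigroup.

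I do not expect a real obstacle — this is essentially a bookkeeping lemma — but two points need care. First, $X$ (and $\psi$) may be asymmetric, so the order of the arguments in each distance must be respected: $D$-coboundedness is used in its two-sided form precisely so that the middle estimate $d_X(g_{j-1}x,g_jx)\le(n+2)D$ survives, and the telescoping must list the intermediate points in the forward order. Second, the constants must be tracked exactly to land on $nD|g|_{S_n}-nD$ and $(n-\al-1)|g|_{S_n}-(n-1)$ rather than on quantities merely comparable to them; it is the choice $m = \lfloor n-\al\rfloor$, together with $\lfloor n-\al\rfloor > n-\al-1$, that produces the denominator $n-\al-1$ and the additive $n-1$ in part (2).
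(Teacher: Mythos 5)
Your proof is correct, and it is precisely the argument the paper is invoking when it refers to \cite[Lem.~5.1]{reyes} for part (1) and \cite[Lem.~4.6]{BCGS} for part (2): cut a geodesic (resp.\ $\al$-rough geodesic) into pieces of length roughly $nD$ (resp.\ $\lfloor n-\al\rfloor$), transport the subdivision points back to $\G$ using two-sided $D$-coboundedness (resp.\ left-invariance), and telescope. Your bookkeeping of the asymmetry and the exact constants $nD|g|_{S_n}-nD$ and $(n-\al-1)|g|_{S_n}-(n-1)$ is also correct, so nothing further is needed.
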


%{\color{blue} Should we include the proof for completeness?}

\begin{remark}
    (1) In both cases above, we do not require the action to induce a quasi-isometric embedding of $\G$ into $X$, so the sets $S_L$ can be infinite. \\
    
    \noindent (2) From the second part of the lemma above we deduce that $\psi$ is quasi-isometric to $d_{S_n}$ for all $n$ large enough. When $\G$ is hyperbolic and $\psi$ is a hyperbolic distance-like function, this implies that if $\psi$ is not quasi-isometric to pseudo metrics in $\calD_\G$, then the sets $S_n$ must be infinite. Therefore, a pseudo metric in $\ov\calD_\G$ belongs to $\calD_\G$ if and only if it is proper.
\end{remark}

\section{Approximate quantitative rigidity}\label{sec.qr}

In this section we prove our approximate length spectrum rigidity theorems in Sections \ref{sec.introhyperbolic} and \ref{sec.introanosov}.

Let $\G$ be a (non-necessarily hyperbolic) group acting isometrically on the possibly asymmetric metric space $X=(X,d_X)$. For a set $S\subset \G$ we let $S^n$ be the set of all products of $n$ elements in $S$. If  $x\in X$ we set $d_X(x,S)=\sup_{s\in S}{d_X(x,sx)}$. We say that $S$ is \emph{bounded} for the action of $\G$ on $X$ if $d_X(x,S)$ is finite for some (and hence any) $x\in X$.

If $S\subset \G$ is bounded then its \emph{joint stable length} is given by the limit
\[
\frakD_X(S):=\lim_{n\to \infty}{\frac{d_X(x,S^n)}{n}},
\]
which is finite for every finite set $S\subset \G$ and independent of $x\in X$. The joint stable length is an analog of the joint spectral radius for sets of matrices and first appeared in \cite{or} (see also \cite{b.f}). 

%If $\rho$ is an Anosov representation of $\G$ and $S\subset \G$ is finite, then $\frakD_{\psi^\rho}(S)=\log \frakR(\rho(S))$ where $\psi^\rho$ is the hyperbolic distance-like function associated to $\rho$. 
When $X$ is a hyperbolic metric space and the action of $\G$ is non-elementary, the relation between the joint stable length and the dilation is given by the following result, for which we recall the notation $\Dil(X_\ast,S)=\Dil(X_\ast,\Cay(\G, S))=\Dil(X,d_S)$ when $S$ generates $\G$. %, where $d_S(x,y)=|x^{-1}y|_S$ is the right word metric with respect to the generating subset $S\subset \G$.

\begin{proposition}\label{prop.dil=jsr}
Let $\G$ act isometrically on the possibly asymmetric metric space $X$ and let $S\subset \G$ be a subset that generates $\G$ as a semigroup. Then $\Dil(X,S)\leq \frakD_X(S)$, so that $S$ being bounded implies that $\Dil(X,S)$ is finite.
%Let $S\subset \G$ be a finite generating set, and let $\psi$ be a hyperbolic distance-like function on $\G$. 

Conversely suppose that either:
\begin{enumerate}
    \item $X$ is a hyperbolic metric space and the action of $\G$ is non-elementary; or, 
    \item $X=(\G,\psi)$ where $\G$ is hyperbolic and acts by the left on $X$ and $\psi$ is a hyperbolic distance-like function. 
\end{enumerate}
Then $\Dil(X,S)$ being finite implies that $S$ is bounded and moreover we have $\Dil(X,S)=\frakD_X(S)$.
%In particular, if $\rho:\G \ra \PSL_m(\R)$ is a projective Anosov representation with hyperbolic distance-like function $\psi^\rho$, then $\Dil(\psi^\rho , d_S)=\log\frakR(\rho(S)).$
\end{proposition}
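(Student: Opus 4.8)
The plan is to prove the inequality $\Dil(X,S)\le\frakD_X(S)$ first (assuming $S$ is bounded, the case in which $\frakD_X(S)$ is defined) and then, under hypothesis (1) or (2), to deduce from the finiteness of $\Dil(X,S)$ both that $S$ is bounded and that the reverse inequality $\frakD_X(S)\le\Dil(X,S)$ holds. For the first inequality, assume $S$ bounded, so that $a_1:=d_X(x,S)<\infty$ and $a_n:=d_X(x,S^n)\le na_1$ is finite for all $n$; by subadditivity $a_n/n\to\frakD_X(S)=\inf_n a_n/n$. Fix $[g]\in\conj$, put $m_k:=|g^k|_S$ (so $m_k/k\to\ell_S[g]$), and note that $g^k\in S^{m_k}$ gives $d_X(x,g^kx)\le a_{m_k}$. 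If $\ell_S[g]>0$ then $m_k\to\infty$ and $d_X(x,g^kx)/k\le(a_{m_k}/m_k)(m_k/k)\to\frakD_X(S)\,\ell_S[g]$; if $\ell_S[g]=0$ then $d_X(x,g^kx)/k\le a_1(m_k/k)\to0$. Either way $\ell_X[g]\le\frakD_X(S)\,\ell_S[g]$, so $\eta=\frakD_X(S)$ is admissible in the infimum defining $\Dil(X,S)$, which proves $\Dil(X,S)\le\frakD_X(S)$ and hence finiteness of $\Dil(X,S)$ when $S$ is bounded.

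The converse rests on a closing lemma, which I expect to be the main obstacle: under hypothesis (1) or (2) there are constants $C,C'\ge0$, independent of $\gamma$, such that for every $\gamma\in\G$ there exists $g\in\G$ with
\[
\ell_X[g]\ge d_X(x,\gamma x)-C\qquad\text{and}\qquad |g|_S\le|\gamma|_S+C'.
\]
To produce $g$, one uses non-elementarity to fix two loxodromic elements $a,b$ whose four fixed points in the boundary are pairwise distinct, lets $F$ be the finite set of powers $a^{\pm i},b^{\pm i}$ with $i$ bounded in terms of $\del$ and the action, and argues by a ping--pong / North--South dynamics argument that for a suitable $f\in F$ the Gromov product at $x$ of the points $(\gamma f)x$ and $(\gamma f)^{-1}x$ (with respect to $d_X$) stays bounded; then $g:=\gamma f$ is loxodromic with $\ell_X[g]\ge d_X(x,gx)-C_0\ge d_X(x,\gamma x)-C$, while $|g|_S\le|\gamma|_S+\max_{f\in F}|f|_S$ (each $|f|_S$ is finite because $S$ generates $\G$ as a semigroup). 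In case (1) this is the standard closing lemma for non-elementary isometric actions on $\del$-hyperbolic spaces (see e.g.\ \cite{b.f,or}); in case (2) one runs the same argument for the hyperbolic distance-like function $\psi$, which is roughly geodesic by Proposition \ref{prop:RGforHDLF}, so that neither properness nor symmetry is needed (cf.\ \cite{cantrell.reyes}). The delicate points are the uniformity of $C$ and $C'$ in $\gamma$ and this adaptation to the possibly non-proper, asymmetric, merely roughly geodesic setting of~(2).

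Granting the closing lemma I would finish the converse as follows. Assume hypothesis (1) or (2) and $\Dil(X,S)<\infty$. First, $S$ must be bounded: otherwise there are $s_j\in S$ with $d_X(x,s_jx)\to\infty$, and applying the closing lemma to $\gamma=s_j$ yields $g_j$ with $\ell_X[g_j]\ge d_X(x,s_jx)-C\to\infty$ and $\ell_S[g_j]\le|g_j|_S\le 1+C'$; for large $j$ we have $\ell_X[g_j]>0$, hence $\ell_S[g_j]>0$ (as $\Dil(X,S)<\infty$) and $\ell_X[g_j]\le\Dil(X,S)\,\ell_S[g_j]\le\Dil(X,S)(1+C')$, contradicting $\ell_X[g_j]\to\infty$. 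Now $S$ is bounded, so $\frakD_X(S)$ is defined, and by the first step it remains to show $\frakD_X(S)\le\Dil(X,S)$. Suppose not, and pick $\lam$ with $\Dil(X,S)<\lam<\frakD_X(S)$. Since $a_n/n\ge\frakD_X(S)>\lam$ for every $n$, choose $\gamma_n\in S^n$ with $d_X(x,\gamma_nx)>n\lam$, so $|\gamma_n|_S\le n$; the closing lemma produces $g_n$ with $\ell_X[g_n]\ge n\lam-C$ and $\ell_S[g_n]\le n+C'$. For $n$ large $\ell_X[g_n]>0$, hence $\ell_S[g_n]>0$ and
\[
n\lam-C\le\ell_X[g_n]\le\Dil(X,S)\,\ell_S[g_n]\le\Dil(X,S)(n+C').
\]
Dividing by $n$ and letting $n\to\infty$ gives $\lam\le\Dil(X,S)$, a contradiction. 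Therefore $\frakD_X(S)\le\Dil(X,S)$, and combined with the first inequality we obtain $\Dil(X,S)=\frakD_X(S)$.
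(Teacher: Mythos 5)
Your proposal is correct and follows essentially the same route as the paper. The first inequality $\Dil(X,S)\leq\frakD_X(S)$ is proved by the same subadditivity argument (the paper case-splits on $\ell_X[g]>0$ rather than on $\ell_S[g]>0$, but the content is identical), and your ``closing lemma'' is precisely the paper's Lemma~\ref{lem.gam_x}, which states $d_X(x,gx)\leq\max_{f\in F}\ell_X[gf]+C$ for a finite $F\subset\G$; taking $g\mapsto gf$ with $f$ realising the maximum gives exactly your formulation with $C'=\max_{f\in F}|f|_S$. The paper likewise does not prove this lemma but cites \cite[Lem.~4.7]{WXY} for case (1) and \cite[Cor.~3.12]{cantrell.reyes} for case (2), so treating it as a cited ingredient is faithful to the source. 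The only cosmetic difference is that you deduce boundedness of $S$ and $\frakD_X(S)\leq\Dil(X,S)$ by contradiction, whereas the paper gets both in one stroke by directly bounding $d_X(x,gx)$ for $g\in S^n$ by $n\Dil(X,S)+\text{const}$, taking the supremum, dividing by $n$, and letting $n\to\infty$.
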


We will need the following lemma, which follows from \cite[Cor.~3.12]{cantrell.reyes} when $X=(\G,\psi)$ and $\psi$ is a hyperbolic distance-like function on the hyperbolic group $\G$. When $X$ is hyperbolic and the action is non-elementary, it follows, for instance, from \cite[Lem.~4.7]{WXY}.

\begin{lemma}\label{lem.gam_x}
    Let $\G$ and $X$ be as in the assumptions (1) or (2) of Proposition \ref{prop.dil=jsr}. Then for any $x\in X$ there exist $C>0$ depending on $x$ and a finite set $F \subset \G$ such that for any $g\in \G$ we have
\begin{equation*}\label{eq:AMSproperty}
    d_X(x,gx)\leq \max_{f\in F}{\ell_X[gf]}+C.
\end{equation*}    
    %Then there exists an assignment $\G \ra \G, x \mapsto \gamma_x$ such that the following holds. If $\psi$ is a hyperbolic distance-like function on $\G$ then there exists $C_0, R_0 \geq 0$ such that for any $x\in \G$ we have $\max(\psi(x,\gam_x), \psi(\gam_x,x))\leq R_0$, and $\psi(o,\gam_x)\leq \ell_\psi[\gam_x]+C_0$.
\end{lemma}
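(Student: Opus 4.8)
The statement to prove is Lemma~\ref{lem.gam_x}: for $\G$ and $X$ as in hypotheses (1) or (2) of Proposition~\ref{prop.dil=jsr}, given $x\in X$ there exist $C>0$ and a finite set $F\subset\G$ with $d_X(x,gx)\le\max_{f\in F}\ell_X[gf]+C$ for all $g\in\G$. The essential content is that any group element can be \emph{conjugated into} (or rather, multiplied into) an element whose translation length is comparable to its displacement, uniformly. The mechanism is standard in the theory of hyperbolic spaces: if $g$ moves $x$ a long way, then $g$ (or a bounded modification of it) acts loxodromically on a quasi-geodesic axis, and for loxodromic isometries the translation length and the displacement at a point near the axis differ by a bounded amount. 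The bounded modification is needed because $g$ itself may have small translation length (e.g. $g$ could be elliptic or parabolic, or have an axis far from $x$); multiplying by a suitable element $f$ from a finite set forces the product to translate along a geodesic passing near $x$.

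\textbf{Step 1: Reduce to citing the stated references.} The cleanest route, and the one the excerpt explicitly signals, is not to reprove this from scratch but to invoke the cited results. In case (2), where $X=(\G,\psi)$ with $\psi$ a hyperbolic distance-like function on the hyperbolic group $\G$, the inequality is exactly (a reformulation of) \cite[Cor.~3.12]{cantrell.reyes}; one only needs to check that the constant $C$ and finite set $F$ there do not depend on $g$, which is part of that statement. In case (1), where $X$ is a genuine $\delta$-hyperbolic space with a non-elementary $\G$-action, the inequality follows from \cite[Lem.~4.7]{WXY}: that lemma produces, from non-elementarity, a finite set of independent loxodromic elements and a ``local-to-global'' estimate allowing one to absorb the displacement of $x$ into a translation length after right-multiplication by a bounded element.

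\textbf{Step 2: Sketch the underlying argument, in case one wants it self-contained.} Fix $x\in X$. By non-elementarity (case (1)) or by the fact that $\psi$ is hyperbolic distance-like and $\G$ is non-elementary hyperbolic (case (2)), choose two independent loxodromic elements $h_1,h_2\in\G$; their axes $\gamma_1,\gamma_2$ have distinct pairs of endpoints on the Gromov boundary. For a given $g\in\G$, consider the points $x$ and $gx$. If $d_X(x,gx)$ is bounded by a fixed constant there is nothing to prove (take $f=o$ and enlarge $C$). Otherwise, $gx$ is far from $x$; pick $f\in\{o,h_1,h_2,h_1h_2,\dots\}$ (a fixed finite list chosen so that for \emph{every} direction the segment $[x,gx]$ could point, some $gf$ has its attracting/repelling dynamics ``transverse'' to that segment) so that $gf$ translates along a quasi-geodesic that fellow-travels the segment $[x, (gf)x]$ near $x$ with a definite amount of ``progress''. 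A standard ping-pong / thin-triangles computation then shows $(gf)$ is loxodromic with $\ell_X[gf]\ge d_X(x,(gf)x)-C'$ for a uniform $C'$. Since $d_X(x,(gf)x)\ge d_X(x,gx)-\max_i d_X(x,h_ix)\cdot(\text{length of the word})\ge d_X(x,gx)-C''$, combining gives $\ell_X[gf]\ge d_X(x,gx)-C$ with $C=C'+C''$, which is the claim.

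\textbf{Main obstacle.} The only real subtlety is \emph{uniformity of $F$ in $g$}: one must ensure that a \emph{single} finite set $F$ works for all $g$ simultaneously. This is handled by the standard fact that finitely many independent loxodromics suffice to ``catch'' any boundary direction — their axes' endpoint pairs cannot all be near a single point, so for any $g$ at least one choice of $f\in F$ makes the endpoints of the axis of $gf$ stay uniformly away from the endpoints they would need to avoid, yielding the uniform lower bound on progress. In the write-up I would simply defer to \cite[Cor.~3.12]{cantrell.reyes} and \cite[Lem.~4.7]{WXY} for these two cases respectively, noting in each case that the cited statement already provides $C$ and $F$ independent of $g$, since reproving the ping-pong estimate here would be a lengthy detour.
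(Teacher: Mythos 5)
Your proposal matches the paper exactly: the paper gives no proof of Lemma~\ref{lem.gam_x} beyond citing \cite[Cor.~3.12]{cantrell.reyes} for case~(2) and \cite[Lem.~4.7]{WXY} for case~(1), which is precisely your Step~1. Your Step~2 sketch of the underlying ping-pong/independent-loxodromics mechanism is a reasonable account of what those references establish, but it is additional context that the paper itself does not spell out.
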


\begin{proof}[Proof of Proposition \ref{prop.dil=jsr}] We fix $x\in X$ and let $|\cdot|_S$ denote the word length on $\G$ with respect to $S$. Let $g\in \G$ and for $k>0$ we set $n(k)=|g^k|_S$, so that $g\in S^{n(k)}$. If $\ell_X[g]>0$ then $n(k)$ tends to infinity as $k\to \infty$ and we have
\begin{align*}
    \ell_X[g]=\lim_{k\to \infty} {\frac{d_X(x,g^kx)}{k}} & \leq \limsup_{k \to\infty}{\frac{d_X(x,S^{n(k)})}{k}} \\ & \leq \limsup_{k \to\infty}{\left(\frac{d_X(x,S^{n(k)})}{n(k)} \cdot \frac{|g^k|_S}{k}\right)}=\frakD_X(S) \, \ell_S[g].
\end{align*}

Since this holds for all $g \in \G$ with $\ell_S[g]>0$ we obtain $\Dil(X, S) \leq \frakD_X(S)$.

For the reverse inequality, assume that $\G$ and $X$ satisfy either (1) or (2) and let $C>0$ and $F\subset \G$ be given by Lemma \ref{lem.gam_x}.  Then for $n>0$ and $g\in S^n$ we have
\begin{align*}
    d_X(x,gx) &\leq \max_{f\in F}{\ell_X[gf]}+C \\ & \leq  \Dil(X,S)\max_{f\in F}{\ell_S[gf]}+C \\
    & \leq \Dil(X,S)(|g|_S+\max_{f\in F}{|f|_S})+C \\ & \leq n\Dil(X,S)+(\Dil(X,S)\max_{f\in F}{|f|_S}+C).
\end{align*}

This last inequality holds for any $g\in S^n$, and since $F$ is finite we obtain that $S$ is bounded if $\Dil(X,S)$ is finite. Also, after taking supremum over $g\in S^n$, dividing by $n$ and letting $n$ tend to infinity we get $\frakD_X(S)\leq \Dil(X,S)$, as desired.
\end{proof}

Now we move to the proofs of Theorems \ref{thm.approxLSRvscobounded} and \ref{thm.approxLSRvswordmetric}. For this we need the following bound for the joint stable length of isometric actions on Gromov hyperbolic spaces due to Breuillard and Fujiwara. 

\begin{theorem}[Breuillard-Fujiwara {\cite[Thm.~1.4]{b.f}}] \label{thm.breufuji}
There exists a universal constant $K>0$ such that if $X$ is a $\del$-hyperbolic metric space and $S$ is a bounded set of isometries of $X$, then
\begin{equation}\label{eq.breufuji}
\frakD_X(S) \leq K \delta+\frac{1}{2} \sup_{s\in S^2 }\ell_X\left[s\right] \leq K \delta+\frakD_X(S).
\end{equation}    
\end{theorem}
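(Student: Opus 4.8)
I would split \eqref{eq.breufuji} into its two inequalities. The right-hand inequality $K\del+\tfrac12\sup_{s\in S^2}\ell_X[s]\le K\del+\frakD_X(S)$ is equivalent to $\tfrac12\sup_{s\in S^2}\ell_X[s]\le\frakD_X(S)$, and I would prove it directly: if $s\in S^2$ then $s^k\in S^{2k}$ for every $k\ge 1$, so $d_X(x,s^kx)\le d_X(x,S^{2k})$; dividing by $k$ and letting $k\to\infty$ gives $\ell_X[s]\le 2\frakD_X(S)$, and then I would take the supremum over $s\in S^2$. This leaves the substantive inequality $\frakD_X(S)\le K\del+\tfrac12 L$, with $L:=\sup_{s\in S^2}\ell_X[s]$. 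Since $\ell_X[s]=\tfrac12\ell_X[s^2]\le\tfrac12 L$ for $s\in S$, the hypothesis ``$\ell_X\le L$ on $S^2$'' is equivalent to ``$\ell_X\le L$ on $S\cup S^2$'', the form I would work with.

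The plan is to reduce everything to the claim that there is a universal constant $C_0$ so that, whenever $\ell_X[s]\le L$ for all $s\in S\cup S^2$, the set $S$ has an almost common fixed point: some $x_0\in X$ with $d_X(x_0,sx_0)\le\tfrac{L}{2}+C_0\del$ for every $s\in S$. Granting this, the theorem would follow from one estimate: for any $w=s_1\cdots s_n\in S^n$, the triangle inequality along the orbit $x_0,s_1x_0,s_1s_2x_0,\dots,wx_0$ gives
\[
d_X(x_0,wx_0)\le\sum_{i=1}^{n}d_X(x_0,s_ix_0)\le n\Bigl(\tfrac{L}{2}+C_0\del\Bigr);
\]
taking the supremum over $w\in S^n$, dividing by $n$ and letting $n\to\infty$ (using that $\frakD_X(S)$ is base-point independent) would give $\frakD_X(S)\le\tfrac{L}{2}+C_0\del$, i.e.\ the bound with $K=C_0$.

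The main obstacle is the claim itself. I would let $\Lam:=\inf_{x\in X}\sup_{s\in S}d_X(x,sx)$ be the joint minimal displacement, pick $x_0$ with $\sup_{s\in S}d_X(x_0,sx_0)\le\Lam+\del$, and aim to show $\Lam\le\tfrac{L}{2}+C_0'\del$. The argument I have in mind is a ``no improving direction'' analysis at the near-minimizer $x_0$: for each $s\in S$ the function $y\mapsto d_X(y,sy)$ is quasiconvex along geodesics, with minimum within $O(\del)$ of $\ell_X[s]$ and attained within $O(\del)$ of the min-set of $s$, so if $d_X(x_0,sx_0)$ exceeds $\ell_X[s]$ by more than $O(\del)$ then moving $x_0$ toward that min-set strictly decreases $s$'s displacement. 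Near-minimality of $x_0$ then forces, for the elements $s$ that realize $\Lam$ up to $O(\del)$, one of two scenarios: either $\ell_X[s]\ge\Lam-O(\del)$, whence $s^2\in S^2$ has $\ell_X[s^2]=2\ell_X[s]\ge2\Lam-O(\del)$; or there is a pair $s,t$ of such elements whose min-sets are positioned so that the concatenation of the geodesics $[x_0,sx_0]$ and $s[x_0,tx_0]=[sx_0,st\,x_0]$ is a uniform quasigeodesic, and then the thin-triangle / Gromov-product estimate forces $\ell_X[st]\ge d_X(x_0,sx_0)+d_X(x_0,tx_0)-O(\del)\ge2\Lam-O(\del)$ with $st\in S^2$. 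Either way $L\ge2\Lam-O(\del)$, proving the claim. The hard part will be making this dichotomy genuinely quantitative in a merely $\del$-hyperbolic (not $\CAT(0)$) space, keeping all constants universal, and --- since $S$ need not be symmetric --- organizing the bookkeeping so that only the products $st$ and $s^2$ (which lie in $S^2$) are ever invoked, never $st^{-1}$ or $s^{-1}t$. This is exactly \cite[Thm.~1.4]{b.f}; in the sequel we use only the statement.
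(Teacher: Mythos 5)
This is a cited external result, not one the paper proves: the statement is attributed to Breuillard--Fujiwara \cite[Thm.~1.4]{b.f}, and the authors give no proof of it, so there is no in-paper argument to compare your sketch against. What the paper \emph{does} supply is a remark explaining two mild extensions needed to apply the theorem in the form stated: (i) the original result is for finite $S$, but bounded $S$ suffices because $\frakD_X(S)=\sup\{\frakD_X(F):F\subset S\ \text{finite}\}$; and (ii) the original result is for geodesic $X$, but a general $\delta$-hyperbolic $X$ embeds isometrically and $\Gamma$-equivariantly into its injective hull $E(X)$, which is geodesic and still $\delta$-hyperbolic by Lang, and both $\frakD_\bullet(S)$ and $\sup_{s\in S^2}\ell_\bullet[s]$ are preserved under such embeddings. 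Your proposal addresses neither reduction.

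As for your sketch itself: the easy direction is correct and complete (for $s\in S^2$, $s^k\in S^{2k}$, so $\ell_X[s]\le 2\frakD_X(S)$), and the reduction of the hard direction to exhibiting an almost-common fixed point $x_0$ with $\sup_{s\in S}d_X(x_0,sx_0)\le L/2+O(\delta)$ is sound (the triangle-inequality telescope along $x_0,s_1x_0,\dots,wx_0$ and base-point independence of $\frakD_X$ then finish). But the almost-common-fixed-point claim is the entire content of \cite[Thm.~1.4]{b.f}, and your ``no improving direction'' dichotomy (either some $s$ has $\ell_X[s]\ge\Lambda-O(\delta)$, or some $st$ with $s,t\in S$ does) is asserted rather than established; the quantitative version in a merely $\delta$-hyperbolic space, with universal constants and using only $S^2$ (never $st^{-1}$), is precisely the nontrivial part, as you yourself acknowledge. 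Since the paper uses this result as a black box, your honest deferral to \cite{b.f} is the right call, but the sketch should not be mistaken for a proof, and you should incorporate the two reductions noted above before invoking the theorem in the generality the paper needs.
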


\begin{remark}
(1) The statement in \cite[Thm.~1.4]{b.f} assumes $S$ to be finite. However, it suffices for $S$ to be bounded as we can easily verify $\frakD_X(S)=\sup\{\frakD_X(F) |F\subset S \text{ finite}\}$.\\

\noindent (2) In \cite{b.f}, the inequality \eqref{eq.breufuji} is proven in the case that $X$ is geodesic. However, this assumption can be overcome by considering the isometric embedding of $X$ into its \emph{injective hull} $E(X)$ \cite{lang}. The isometric action of $\G$ on $X$ extends to an isometric action on $E(X)$ under this injection, and if $X$ is $\del$-hyperbolic, then $E(X)$ is also $\del$-hyperbolic \cite[Prop.~1.3]{lang}. The inequality \eqref{eq.breufuji} for $E(X)$ then implies the inequality for $X$, since the quantities $\frakD_\bullet(S)$ and $\max_{s\in S^2 }\ell_\bullet\left[s\right]$ are preserved under $\G$-equivariant isometric embeddings. However, the assumption that  $X$ is geodesic is crucial to obtain uniform comparisons between $\frakD_X(S)$ and the \emph{joint displacement} $L_{X}(S)=\inf_{x\in X}{d_X(x,S)}$, which is not required in this paper.
\end{remark}

\begin{proof}[Proof of Theorem \ref{thm.approxLSRvswordmetric}]
        Let $\G$ be a group acting non-elementarily on the $\del$-hyperbolic metric space $X_\ast$ and let $S\subset \G$ generate $\G$ as a semigroup so that it is bounded with respect to this action. Let $K$ be the universal constant given by Theorem \ref{thm.breufuji}, and for $L \geq 1$ set 
    $A_L:=\sup_{0<\ell_S[g]\leq L}\frac{\ell_{X_\ast[g]}}{\ell_S[g]}$. 

    If $L$ is an integer, by Proposition \ref{prop.dil=jsr} and Theorem \ref{thm.breufuji} we have 
\begin{align*}
\Dil(X_\ast,S)=\frakD_{X_\ast}(S)  =\frac{\frakD_{X_\ast}(S^L)}{L}  \leq \frac{K\del}{L}+\frac{1}{2L}\sup_{s\in S^{2L}}{\ell_{X_\ast}[s]}.
\end{align*}   

Now, if $s\in S^{2L}$ and $\ell_S[s]=0$, then $\ell_{X_\ast}[s]=0$. Otherwise we have $\ell_{X_\ast}[s]\leq A_{2L}\ell_{S}[s]\leq 2L A_{2L}$, and hence 
\begin{equation*}
 \Dil(X_\ast,S)\leq \frac{K \del }{L}+A_{2L},
\end{equation*}
as desired.
%If $L\geq 1$ is arbitrary, the conclusion holds with $K=2 \hat {K}$ after by applying \ref{eq.DilL} to $\lfloor L \rfloor$ and noting that $L \leq 2 \lfloor L \rfloor$ and $A_{ \lfloor L \rfloor]}\leq A_L$.   
\end{proof}

\begin{proof}[Proof of Theorem \ref{thm.approxLSRvscobounded}]
Let $\G$ act isometrically on $X_\ast$ and $X$ as in the assumptions of the theorem and let $K$ be the constant from Theorem \ref{thm.breufuji}. We can further assume that $\Dil(X_\ast,X)$ is finite.

%be a projective Anosov representation and $\psi:\G \times \G \ra \R$ be an $\al$-roughly geodesic hyperbolic distance-like function quasi-isometric to pseudo metrics in $\calD_\G$. Let $c_m,d_m>0$ be the constants from Theorem \ref{thm.bochi}, and suppose that $\rho$ and $\psi$ satisfy the assumption \eqref{eq.assumpAnosov} for some $L>d_m(\al+1)$ and $\eta>0$.

Let $L>6D$ be arbitrary and consider $n:=\lfloor L/2D\rfloor -2$, which is a positive integer satisfying $2(n+2)D \leq L$. For a fixed point $x\in X$ consider the set $S=S_{n}:=\left\{g \in \Gamma \hspace{1mm}\colon \hspace{1mm} d_X(x, gx) \leq (n+2)D\right\}$. Then $S$ is bounded for the action on $X$ by Lemma \ref{lem.HDLFapproxword} (1) and moreover we have
\begin{equation}\label{eq.ineqdilations}
    \Dil(X,S)\leq \frakD_X(S) \leq  (n+2)D \ \ \text{ and } \ \ \Dil(S,X)\leq (nD)^{-1}.
\end{equation}
This implies that $\Dil(X_\ast,S)\leq \Dil(X_\ast,X)\Dil(X,S)$ is finite and hence $S$ is bounded for the action on $X_\ast$ by Proposition \ref{prop.dil=jsr}.

Therefore, by Theorem \ref{thm.breufuji} and Proposition \ref{prop.dil=jsr} we obtain
\begin{align*}
\Dil(X_\ast, X) & \leq \Dil(X_\ast, S) \, \Dil(S, X) \\
&=\frakD_{X_\ast}(S) \Dil(S, X) \\
&\leq\left(K\del+\frac{1}{2}\sup_{s\in S^2}\ell_{X_\ast}[s]\right) \, \Dil(S, X).
\end{align*}

%Now note that if $s\in S^2$ then $\ell_{X}[s]\leq \Dil(X,S)\ell_S[x]\leq 2(n+2)D\leq L$, and hence Assumption \ref{eq.LSRcoboundedassumptionred} and Proposition \ref{prop.dil=jsr} give us
%$$ \frac{1}{2}\sup_{s\in S^2}\ell_{X_\ast}[s]\leq \eta \cdot \frac{1}{2}\sup_{s\in S^2}\ell_X[s]\leq \eta \cdot \frakD_X(S).$$

Now note that if $s\in S^2$ then $\ell_{X}[s]\leq \Dil(X,S)\ell_S[x]\leq 2(n+2)D\leq L$, and hence Proposition \ref{prop.dil=jsr} gives us
$$ \frac{1}{2}\sup_{s\in S^2}\ell_{X_\ast}[s]\leq \left(\sup_{0<\ell_X[g]\leq L}\frac{\ell_{X_\ast[g]}}{\ell_X[g]}\right)  \cdot \frac{1}{2}\sup_{s\in S^2}\ell_X[s]\leq \left(\sup_{0<\ell_X[g]\leq L}\frac{\ell_{X_\ast[g]}}{\ell_X[g]}\right)  \cdot \frakD_X(S).$$
From this and \eqref{eq.ineqdilations} we deduce
\begin{align*}
\Dil(X_\ast, X) & \leq \left(K \del+\left(\sup_{0<\ell_X[g]\leq L}\frac{\ell_{X_\ast[g]}}{\ell_X[g]}\right)  \cdot \frakD_X(S)\right)\Dil(S, X)\\ 
& \leq \left(\sup_{0<\ell_X[g]\leq L}\frac{\ell_{X_\ast[g]}}{\ell_X[g]}\right)  \cdot \left(\frac{(n+2)D}{nD}\right)+\frac{K\del}{nD} \\
& \leq \left(\sup_{0<\ell_X[g]\leq L}\frac{\ell_{X_\ast[g]}}{\ell_X[g]}\right)  \cdot \left( \frac{L-2D}{L-6D} \right)+\frac{2K\del}{L-6D},
\end{align*}
where in the last inequality we used that $n\geq L/2D-3>0$. This concludes the proof of the theorem.
\end{proof}

We end this section with the proof of Theorem \ref{thm.approxLSRAnosov}, which is a straightforward consequence of Proposition \ref{prop:RGforHDLF} and the following theorem applied to $\psi=\psi^\tau$ the hyperbolic distance-like function induced by $\tau$.

\begin{theorem}\label{thm.approxmatrices}
  For any $m\geq 1$ there exist positive constants $c_m\leq 8\log 2+5 \log m$ and $d_m\leq 2m^3$ with $d_m$ an integer such that the following holds. Suppose $\rho:\G \ra \PSL_m(\R)$ is a projective Anosov representation (so that $\G$ is hyperbolic), and let $\psi: \G \times \G \ra \R$ be a hyperbolic distance-like function that is $\al$-roughly geodesic and quasi-isometric to pseudo metrics in $\calD_\G$. Assume that there $L>d_m(\al+1)$ and $\eta>0$ satisfy
\begin{equation*}\label{eq.assumpAnosov}
       \log{\lam_1(\rho(g))}\leq \eta \cdot \ell_{\psi}[g] \text{ for all }[g]\in \conj \text{ with }\ell_{\psi}[g]\leq L.
   \end{equation*}
   Then
\begin{equation*}
\Dil(\rho,\psi):=\Dil(\psi^\rho,\psi)\leq \frac{c_md_m}{(L-d_m(\al+1))}+\eta \cdot \left(\frac{L}{L-d_m(\al+1)}\right).
\end{equation*}
\end{theorem}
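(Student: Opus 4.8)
The plan is to reproduce the argument of the proof of Theorem~\ref{thm.approxLSRvscobounded}, approximating $\psi$ by a word metric, but with Bochi's inequality for matrix products in place of the Breuillard--Fujiwara bound \eqref{eq.breufuji}. Write $o$ for the identity of $\G$. Since $\rho$ is projective Anosov, $\G$ is hyperbolic, $\psi^\rho$ is a hyperbolic distance-like function on $\G$ (by \cite[Lem.~3.10]{cantrell.reyes}), and $\ell_{\psi^\rho}[g]=\log\lam_1(\rho(g))$ by Gelfand's formula. As $\psi$ and $\psi^\rho$ are both quasi-isometric to pseudo metrics in $\calD_\G$, they are quasi-isometric to each other, so $\Dil(\psi^\rho,\psi)$ is finite.

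First I would fix the \emph{real} parameter $n:=L/d_m$; the hypothesis $L>d_m(\al+1)$ gives $n>\al+1$, so Lemma~\ref{lem.HDLFapproxword}(2) applies to the $\al$-roughly geodesic function $\psi$: the set $S:=S_n=\{g\in\G\mid\psi(o,go)\le n\}$ generates $\G$ as a semigroup and $(n-\al-1)|g|_S-(n-1)\le\psi(o,go)\le n|g|_S$ for all $g$. Passing to stable lengths, $(n-\al-1)\ell_S[g]\le\ell_\psi[g]\le n\ell_S[g]$, so $\Dil(\psi,S)\le n<\infty$ and, crucially,
\[
\Dil(S,\psi)\le\frac{1}{n-\al-1}=\frac{d_m}{L-d_m(\al+1)}.
\]
Since $\Dil(\psi^\rho,S)\le\Dil(\psi^\rho,\psi)\,\Dil(\psi,S)<\infty$, Proposition~\ref{prop.dil=jsr}(2) applied to the left action of the hyperbolic group $\G$ on $(\G,\psi^\rho)$ shows that $S$ is bounded for this action and $\Dil(\psi^\rho,S)=\frakD_{\psi^\rho}(S)$.

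I would then invoke Bochi's inequality (Theorem~\ref{thm.bochi}) in the form giving constants $c_m\le 8\log 2+5\log m$ and an integer $d_m\le 2m^3$ with
\[
\frakD_{\psi^\rho}(S)\le c_m+\frac{1}{d_m}\sup_{s\in S^{d_m}}\ell_{\psi^\rho}[s],
\]
the matrix analogue of the Breuillard--Fujiwara bound \eqref{eq.breufuji}, now at depth $d_m$ in place of depth $2$. The choice $n=L/d_m$ is exactly what makes the next step go through: for $s\in S^{d_m}$ we have $|s|_S\le d_m$, hence $\ell_\psi[s]\le n\ell_S[s]\le n|s|_S\le nd_m=L$, so the hypothesis yields $\ell_{\psi^\rho}[s]=\log\lam_1(\rho(s))\le\eta\,\ell_\psi[s]\le\eta L$, and therefore $\frakD_{\psi^\rho}(S)\le c_m+\eta L/d_m$. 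Combining this with submultiplicativity of the dilation and the two displayed estimates,
\begin{align*}
\Dil(\psi^\rho,\psi)&\le\Dil(\psi^\rho,S)\,\Dil(S,\psi)=\frakD_{\psi^\rho}(S)\,\Dil(S,\psi)\\
&\le\Big(c_m+\frac{\eta L}{d_m}\Big)\cdot\frac{d_m}{L-d_m(\al+1)}=\frac{c_md_m}{L-d_m(\al+1)}+\eta\cdot\frac{L}{L-d_m(\al+1)},
\end{align*}
which is the claimed inequality.

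The structural steps here (approximation by word metrics through Lemma~\ref{lem.HDLFapproxword}, the identity $\Dil=\frakD$ through Proposition~\ref{prop.dil=jsr}, submultiplicativity of $\Dil$) are essentially a transcription of the proofs of Theorems~\ref{thm.approxLSRvscobounded} and \ref{thm.approxLSRvswordmetric}, so the one genuinely new ingredient is Bochi's inequality, and I expect the main work to be in pinning down its precise quantitative shape: extracting the explicit $c_m\le 8\log 2+5\log m$ and $d_m\le 2m^3$ from Bochi's theorem on finite products of $m\times m$ matrices and recasting it in the joint-stable-length language of $\frakD_{\psi^\rho}$ (including the reduction from finite to merely bounded sets $S$, as in the remark after Theorem~\ref{thm.breufuji}, which is needed since $S_n$ may be infinite when $\psi$ is not proper). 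The only point of care beyond the earlier proofs is to take $n=L/d_m$ as a real number rather than an integer, so that the constants coming out of Lemma~\ref{lem.HDLFapproxword}(2) reproduce the asserted error term on the nose.
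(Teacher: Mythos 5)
Your proposal is correct and is exactly the argument the paper has in mind: the paper defers the proof of this theorem to the reader, saying only that it ``goes in the same way as the proof of Theorem~\ref{thm.approxLSRvscobounded}, now using Lemma~\ref{lem.HDLFapproxword}~(2),'' and you have filled in precisely those details, including the key observation that taking $n=L/d_m$ as a \emph{real} parameter in Lemma~\ref{lem.HDLFapproxword}(2) produces the stated error term. One small imprecision: the inequality you attribute to Bochi, namely $\frakD_{\psi^\rho}(S)\le c_m+\tfrac{1}{d_m}\sup_{s\in S^{d_m}}\ell_{\psi^\rho}[s]$, does not follow from Theorem~\ref{thm.bochi} as stated, which gives instead
\[
\frakD_{\psi^\rho}(S)\le c_m+\max_{1\le j\le d_m}\sup_{s\in S^{j}}\frac{\ell_{\psi^\rho}[s]}{j},
\]
with a maximum over all depths $j\le d_m$, not just $j=d_m$. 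This does not affect your conclusion: for each $1\le j\le d_m$ and each $s\in S^j$ you have $|s|_S\le j$, hence $\ell_\psi[s]\le nj\le L$, so the hypothesis yields $\ell_{\psi^\rho}[s]\le\eta\,\ell_\psi[s]\le\eta n j$ and therefore $\tfrac{\ell_{\psi^\rho}[s]}{j}\le\eta n=\eta L/d_m$ uniformly in $j$. With that correction the rest of your computation goes through verbatim.
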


To prove this theorem we cannot directly apply Theorem \ref{thm.approxLSRvscobounded},
since for an Anosov representation $\rho:\G \ra \PSL_m(\R)$ the induced hyperbolic distance-like function $\psi^\rho$ is in general not  symmetric. Instead, we replace Theorem \ref{thm.breufuji} by
%The proof of this theorem follows the exact same approach as the one for Theorem \ref{thm.approxLSRAnosov}. The only new ingredient is an analog of Bochi's Theorem \ref{thm.bochi} in the setting of isometric actions on Gromov hyperbolic spaces due to Breuillard and Fujiwara \cite{b.f}. 
estimates due to Bochi \cite{bochi} on the \emph{joint spectral radius} $\frakR(S)$ of a bounded set $S\subset M_m(\R)$ of real $m\times m$ matrices. Recall that
\[
\frakR(S):=\lim_{n\to \infty}{\sup_{A\in S^n} \sig_1(A)}^{1/n},
\]
which is well-defined by submultiplicativity. Here $S^n\subset M_m(\R)$ is the set of $n$-fold products of matrices in $S$. By Proposition \ref{prop.dil=jsr} we have that if $\rho:\G \ra \PSL_m(\R)$ is a projective Anosov representation with hyperbolic distance-like function $\psi^\rho$ and $S\subset \G$ is a finite generating set, then $\Dil(\rho,S)=\Dil(\psi^\rho , d_S)=\log\frakR(\rho(S)).$ 

The main estimate we require is the following, where the upper bounds for $c_m$ and $d_m$ follow from \cite[Thm.~5]{breuillard}.
\begin{theorem}[Bochi {\cite[Thm.~B]{bochi}}]\label{thm.bochi}
   For all $m\geq 1$ there exist $0\leq c_m \leq 8\log 2+5 \log m$ and a positive integer $d_m\leq 2m^3$ such that for any bounded set $S\subset M_{m}(\R)$ we have
    \begin{equation*}
        \frakR(S)\leq e^{c_m} \max_{1\leq j \leq d_m}{\sup_{A\in S^j}\lam_1(A)^{1/j}}.
    \end{equation*}
\end{theorem}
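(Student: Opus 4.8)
The statement to prove is Theorem~\ref{thm.approxmatrices}, so the goal is to bound $\Dil(\rho,\psi) = \Dil(\psi^\rho,\psi)$ given the hypothesis that $\log\lam_1(\rho(g)) \le \eta\cdot\ell_\psi[g]$ for all conjugacy classes $[g]$ with $\ell_\psi[g]\le L$, where $L > d_m(\al+1)$. The strategy mirrors the proof of Theorem~\ref{thm.approxLSRvscobounded}, but replaces the Breuillard--Fujiwara inequality (Theorem~\ref{thm.breufuji}) with Bochi's inequality (Theorem~\ref{thm.bochi}), and replaces the $D$-cobounded action on a geodesic space with the roughly geodesic structure of $\psi$ from Lemma~\ref{lem.HDLFapproxword}(2). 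First I would fix a basepoint $o\in\G$ (the identity) and, for a suitable integer $n$, introduce the generating set $S = S_n := \{g\in\G : \psi(o,go) \le n\}$. By Lemma~\ref{lem.HDLFapproxword}(2), provided $n>\al+1$, the set $S_n$ generates $\G$ as a semigroup and satisfies the two-sided estimate $(n-\al-1)|g|_{S_n} - (n-1) \le \psi(o,go) \le n|g|_{S_n}$, which yields $\Dil(\psi,S_n) \le n$ and $\Dil(S_n,\psi) \le (n-\al-1)^{-1}$.

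\textbf{Choice of $n$ and the chain of dilations.} I would choose $n = \lfloor L/d_m \rfloor$, so that $n \le L/d_m$ and $d_m \cdot n \le L$; the hypothesis $L > d_m(\al+1)$ guarantees $n > \al+1$ (after possibly adjusting by the floor, using $n \ge L/d_m - 1 > \al+1 - 1$, so one should be slightly careful and perhaps take $n = \lfloor L/d_m\rfloor$ and verify $n-\al-1>0$). Using submultiplicativity of $\Dil$ and Proposition~\ref{prop.dil=jsr} (noting $\psi^\rho$ is a hyperbolic distance-like function on the hyperbolic group $\G$, so case (2) of the Proposition applies and $\Dil(\psi^\rho,S) = \frakD_{\psi^\rho}(S)$), I get
\begin{equation*}
\Dil(\rho,\psi) = \Dil(\psi^\rho,\psi) \le \Dil(\psi^\rho,S)\,\Dil(S,\psi) = \frakD_{\psi^\rho}(S)\,\Dil(S,\psi).
\end{equation*}
Now the key point: $\frakD_{\psi^\rho}(S) = \log\frakR(\rho(S))$ (the joint stable length of the action equals the log of the joint spectral radius of the image matrices, as recorded in the excerpt before Theorem~\ref{thm.bochi}), so Bochi's inequality gives
\begin{equation*}
\frakD_{\psi^\rho}(S) = \log\frakR(\rho(S)) \le c_m + \max_{1\le j\le d_m}\ \frac{1}{j}\sup_{A\in \rho(S)^j}\log\lam_1(A) = c_m + \max_{1\le j\le d_m}\ \frac{1}{j}\sup_{g\in S^j}\log\lam_1(\rho(g)).
\end{equation*}

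\textbf{Feeding in the hypothesis.} For $g\in S^j$ with $j\le d_m$, we have $|g|_S \le j \le d_m$, hence $\ell_\psi[g] \le \psi(o,g^k o)/k$-type estimate gives $\ell_\psi[g] \le n|g|_S \le n d_m \le L$ (using $nd_m\le L$), so the conjugacy class of $g$ satisfies the hypothesis and $\log\lam_1(\rho(g)) \le \eta\cdot\ell_\psi[g]$. Combining with $\ell_\psi[g] \le n|g|_S \le nj$ for $g\in S^j$, I obtain $\frac{1}{j}\log\lam_1(\rho(g)) \le \frac{1}{j}\eta\cdot nj = \eta n$ for each such $g$; note one must handle the case $\ell_\psi[g]=0$ separately, in which case $\lam_1(\rho(g))\le 1$ by a limit argument and the bound is trivial. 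Therefore $\frakD_{\psi^\rho}(S) \le c_m + \eta n$, and plugging into the chain,
\begin{equation*}
\Dil(\rho,\psi) \le \frac{c_m + \eta n}{n-\al-1} = \frac{c_m}{n-\al-1} + \eta\cdot\frac{n}{n-\al-1}.
\end{equation*}
Finally I substitute $n = \lfloor L/d_m\rfloor$; using $n \ge L/d_m - 1$ so that $n-\al-1 \ge L/d_m - \al - 2 = (L - d_m(\al+2))/d_m$, and $n \le L/d_m$, one massages this into an expression of the form $\frac{c_m d_m}{L - d_m(\al+1)} + \eta\cdot\frac{L}{L-d_m(\al+1)}$ as in the statement — the exact bookkeeping with the floor function and absorbing $\al+2$ versus $\al+1$ into the constants $c_m$ and $d_m$ (both of which are only bounded from above, not pinned down) is the routine part. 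The main obstacle is getting the constants to match the claimed clean form $L - d_m(\al+1)$ in the denominator: one likely needs to choose $n$ slightly differently (perhaps $n = \lfloor L/d_m \rfloor$ with a careful argument that $L > d_m(\al+1)$ forces $n-\al-1 \ge (L-d_m(\al+1))/d_m$ up to the floor correction, or redefine $d_m$ by absorbing a $+1$), but since $d_m$ and $c_m$ are only required to satisfy upper bounds this is a matter of careful constant-chasing rather than a genuine difficulty.
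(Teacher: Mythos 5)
You have proved the wrong statement. The result in question is Theorem \ref{thm.bochi} itself: for every $m$ there exist constants $c_m$ and $d_m$ with the stated numerical bounds such that $\frakR(S)\leq e^{c_m}\max_{1\leq j\leq d_m}\sup_{A\in S^j}\lam_1(A)^{1/j}$ holds for \emph{every} bounded set $S$ of real $m\times m$ matrices. This is a statement of pure matrix analysis, with no group action, no hyperbolicity, and no representation $\rho$ in sight. Your proposal instead sets out to prove Theorem \ref{thm.approxmatrices}, and at the crucial step (``so Bochi's inequality gives\dots'') it invokes Theorem \ref{thm.bochi} as a known tool. As a proof of the given statement this is circular: the inequality to be established is exactly the one you assume. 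Nothing in your argument explains where $c_m$ or $d_m$ come from, why $d_m$ can be taken at most $2m^3$, or how one passes from the singular values $\sig_1$ appearing in the definition of $\frakR(S)$ to the spectral radii $\lam_1$ of products of length at most $d_m$ --- and that passage is the entire content of the theorem.

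For what it is worth, the paper does not prove Theorem \ref{thm.bochi} either; it is quoted from Bochi's paper (Theorem B there), with the explicit bounds on $c_m$ and $d_m$ imported from Breuillard. An actual proof is genuinely matrix-theoretic: Bochi controls the norm of a long product $A_n\cdots A_1$ by the spectral radii of suitable subproducts of bounded length via a compactness and normalization argument in $M_m(\R)$, and the bound on $\frakR(S)$ follows; none of that machinery appears in, or could be extracted from, your write-up. Separately, read as a proof of Theorem \ref{thm.approxmatrices}, your argument does follow the route the paper intends (it explicitly leaves those details to the reader, modelled on the proof of Theorem \ref{thm.approxLSRvscobounded} with Lemma \ref{lem.HDLFapproxword}(2) replacing coboundedness), so that portion of your work is essentially sound --- it just is not the statement you were asked to prove.
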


%Also, for $\rho$ an Anosov representation inducing the hyperbolic distance-like function $\psi^\rho$, we 

With Theorem \ref{thm.bochi} at our disposal, the proof of Theorem \ref{thm.approxmatrices} goes in the same way as the proof of Theorem \ref{thm.approxLSRvscobounded}, now using Lemma \ref{lem.HDLFapproxword} (2). Details are left to the reader. \qedhere

\section{Recovering Butt's theorem}\label{sec.buttthm}

In this section we prove Theorem \ref{thm.butt},
%\begin{theorem}[Butt {\cite[Thm.~1.2]{butt}}]\label{thm.butt}
%Let $(M,\gm)$ and $(M_\ast,\gm_\ast)$ be closed Riemannian manifolds with sectional curvatures lying in the interval $[-\Lam^2,-\lam^2]\subset \R_{<0}$, and let $i_\gm$ be the injectivity radius of $(M,\gm)$. Let $\G$ be the fundamental group of $M$ and suppose there exists an isomorphism $f_\ast:\G \ra \pi_1(M_\ast)$. Then for any $0<\ep_0<1$ there exists some $L_0=L_0(\G,\lam,\Lam,i_\gm,\ep_0)$ such that the following holds.  If $L\geq L_0$, $0<\ep\leq \ep_0$ and 
%\begin{equation}\label{eq.buttassumption}
%1-\ep\leq \frac{\ell_{\gm_\ast}[f_\ast x]}{\ell_{\gm}[x]}\leq 1+\ep \text{ for all } [x]\in \conj' \text{ with }\ell_{\gm_\ast}[f_\ast x]\leq L,
%\end{equation}
%then 
%\begin{equation}\label{eq.buttconclusion}
%1-\left(\ep+\frac{C}{L}\right)\leq \frac{\ell_{\gm_\ast}[f_\ast x]}{\ell_\gm[x]}\leq 1+\left(\ep+\frac{C}{L}\right)
%\end{equation}
%for all $[x]\in \conj'$,
%where $C$ depends only on $\G, \lam,\Lam, i_\gm$ and $\ep_0$.
%\end{theorem}
%\begin{remark}\label{rmk.butt}
%The conclusion stated above improves the original bounds from \cite[Thm.~1.2]{butt}. In that result, the bounds in \eqref{eq.buttconclusion} are of the form $1\pm CL^{-\beta}$ for some $0<\beta<1$ depending only on $\G,\lam,\Lam$ and $i_\gm$ (for $\ep$ small enough).
%\end{remark}
for which we introduce some terminology. If $M$ is a closed Riemannian manifold, then its \emph{simplicial volume} $\|M\|$ is a real number that describes the simplicial complexity of $M$ \cite[Sec.~0.2]{gromov.volume}. This number is a homotopy invariant, so if $M$ and $M_\ast$ are homotopy equivalent then  $\|M\|=\|M_\ast\|$. When $M, M_\ast$ are negatively curved, this is equivalent to $\pi_1(M)$ being isomorphic to $\pi_1(M_\ast)$, and in this case we write $\|\G\|=\|M\|$ for $\G=\pi_1(M)$. 

If $\G$ is a non-elementary hyperbolic group, recall that the space $\scrD_\G$ of metric structures is the quotient of $\calD_\G$ under rough similarity and that $\Del$ is the metric on $\scrD_\G$ given by \eqref{eq.defDel}. If $\del,\al\geq 0$ then $\scrD_\G^{\del,\al}\subset \scrD_\G$ denotes the subset of all the metric structures represented by a $\del$-hyperbolic and $\al$-roughly geodesic pseudo metric in $\calD_\G$ with \emph{exponential growth rate} 1 (see \cite[Sec.~2]{reyes} for details). Similarly, for $D\geq 0$ we let $\scrD_\G^{\del,(D)}$ denote the space of all metric structures induced by a proper and $D$-cobounded action of $\G$ on a geodesic and $\del$-hyperbolic metric space with exponential growth rate 1. 

\begin{remark}\label{rmk.RGvscobounded}
We can verify that  $\scrD_\G^{\del,(D)}\subset \scrD_\G^{\del,2D+1}$ for all $\del,\al\geq 0$ and $\G$.  The argument for this is as in the proof of \cite[Lem.~4.6]{BCGS}. We leave the details to the reader.
\end{remark}

If $(M,\gm)$ is a closed Riemannian manifold and $\G=\pi_1(M)$ we let $\widetilde{M}_{\gm}=(\widetilde{M},d_{\tilde{\gm}})$ be the universal cover of $M$ equipped with the ($\G$-invariant) Riemannian distance induced by the lift $\tilde{\gm}$ of $\gm$. The Deck action of $\G$ on $\widetilde{M}_{\gm}$ induces the metric structure $[\widetilde{M}_\gm]\in \scrD_\G$.

\begin{lemma}\label{lem.bounddifgeometry}
For any $n\geq 2$ there exists a constant $C_n>0$ such that the following holds. Let $(M,\gm)$ be a closed Riemannian manifold with sectional curvatures contained in the interval $[-\Lam^2,-\lam^2]\subset \R_{<0}$,  injectivity radius $i_\gm$ and fundamental group $\G=\pi_1(M)$. Then
\begin{enumerate}
    \item $\textnormal{Vol}(M,\gm)\leq C_n\|\G\|\lam^{-n}$;
    \item $\textnormal{Diam}(M,\gm)\leq C_n\|\G\|\lam^{-n}i_\gm^{1-n}$;
    \item $\widetilde{M}_{\gm}$ is $\lam^{-1}\log{2}$-hyperbolic and the Deck action of $\G$ on $\widetilde{M}_{\gm}$ is $C_n\|\G\|\lam^{-n}i_\gm^{1-n}$-cobounded; and,
    \item the exponential growth rate $v_{\gm}$ of $\widetilde{M}_{\gm}$ satisfies $\frac{\lam}{C_n}\leq v_{\gm}\leq \Lam(n-1).$
\end{enumerate}
Therefore, we have $[\widetilde{M}_{\gm}]\in \scrD_\G^{\del,(D)}$ for
$$\del=\Lam \lam^{-1}(n-1)\log{2} \hspace{2mm}\text{ and }\hspace{2mm} D=\Lam(n-1)C_n\|\G\|\lam^{-n}i_\gm^{1-n}.$$
\end{lemma}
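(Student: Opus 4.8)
The plan is to collect standard comparison-geometry estimates and assemble them, with the simplicial volume $\|\G\|$ playing the role of a topological substitute for the volume.

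First I would establish (1). By Günther's inequality (volume comparison), a metric with sectional curvature $\le -\lam^2$ has volume growth at least that of the model space $\H^n_{-\lam^2}$; more to the point, Gromov's proportionality principle / the main inequality relating simplicial volume to Riemannian volume (valid when curvature is pinched in $[-\Lam^2,-\lam^2]$) gives $\|M\| \le C_n' \,\mathrm{Vol}(M,\gm)\cdot (\text{isoperimetric constant})$, and the isoperimetric constant of a manifold with $\mathrm{Ric}\ge -(n-1)\Lam^2$... — actually the cleaner route is the straightening argument: $\|M\|$ is bounded above by $\mathrm{Vol}(M,\gm)$ divided by the infimum of volumes of straight (geodesic) simplices, and in curvature $\le -\lam^2$ every geodesic simplex has volume at most $V_n(\lam) = $ (volume of ideal simplex in $\H^n_{-\lam^2}$) $\asymp c_n \lam^{-n}$. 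This yields $\|\G\| \le c_n \lam^{-n}\,\mathrm{Vol}(M,\gm)$, i.e. $\mathrm{Vol}(M,\gm)\ge \lam^n \|\G\|/c_n$, which is the wrong direction — so instead I should bound $\mathrm{Vol}$ from above. For that, note $\mathrm{Vol}(M,\gm)\le \|M\|\cdot \sup(\text{something})$... The correct statement is the one the lemma asserts, and the mechanism is: triangulate $M$ efficiently using the simplicial volume to control the number of top-dimensional simplices, each of diameter $\le$ something, each of volume $\le$ (volume of a ball of that radius) $\le c_n \lam^{-n}$ by Günther comparison (curvature $\le -\lam^2$ forces balls to be \emph{small} in volume... no, balls are \emph{large}). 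I would therefore use the standard fact (see Gromov, \emph{Volume and bounded cohomology}, and its quantitative refinements) that for pinched negatively curved $M$ one has $\mathrm{Vol}(M,\gm) \le C_n \|M\| \lam^{-n}$: the minimal volume of a totally geodesic $n$-simplex in the universal cover is bounded \emph{below} by a constant times $\lam^{-n} \cdot (\text{some small factor})$, hmm. Given the delicacy here, \textbf{this is the step I expect to be the main obstacle}: pinning down the precise comparison-geometry inequality $\mathrm{Vol}(M,\gm)\le C_n\|\G\|\lam^{-n}$ with the dependence stated, and I would cite the relevant quantitative version of Gromov's inequality (e.g.\ \cite{gromov.volume}, together with Besson--Courtois--Gallot type estimates) rather than reprove it.

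Granting (1), step (2) follows by a covering/packing argument: cover $(M,\gm)$ by metric balls of radius $i_\gm/2$; these are embedded (by definition of injectivity radius) and disjoint when concentric balls of radius $i_\gm/4$ are taken, and each such ball has volume $\ge c_n' i_\gm^n$ by Günther's comparison with the upper curvature bound $-\lam^2$ (or even just $\mathrm{Ric}\ge -(n-1)\Lam^2$ gives a two-sided control; for a lower volume bound on small balls one uses $\mathrm{Ric}\ge -(n-1)\Lam^2$ via Bishop--Gromov, giving $\mathrm{Vol}(B(x,r))\ge c_n(\Lam) r^n$ for $r\le i_\gm$). Hence the number of balls in a maximal $i_\gm/2$-separated net is at most $\mathrm{Vol}(M,\gm)/(c_n' i_\gm^n)$, and since such a net is $i_\gm/2$-dense, the diameter of $M$ is at most (number of net points)$\times i_\gm \le C_n \|\G\|\lam^{-n} i_\gm^{1-n}$, absorbing constants. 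Then (3): the hyperbolicity constant of $\widetilde M_\gm$ comes from the standard fact that a $\CAT(-\lam^2)$ space (the universal cover is $\CAT(-\lam^2)$ by curvature $\le -\lam^2$ and simple connectivity, via Cartan--Hadamard) is $\del$-hyperbolic with $\del = \lam^{-1}\log 2$ (this is the classical estimate for thin triangles in $\H^2_{-\lam^2}$, e.g.\ the inscribed-circle comparison; I would cite Bridson--Haefliger). Coboundedness of the Deck action with constant $= \mathrm{Diam}(M,\gm)$ is immediate: any two points of $\widetilde M$ can be moved within distance $\mathrm{Diam}(M,\gm)$ of each other by a deck transformation, since $M=\widetilde M/\G$ has that diameter; so the bound from (2) applies.

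Finally (4): the exponential growth rate $v_\gm$ of $\widetilde M_\gm$ equals the volume entropy, and the classical entropy comparison (Manning, and the Bishop--Günther two-sided bounds) gives $\lam(n-1)\le v_\gm \le \Lam(n-1)$ for curvature in $[-\Lam^2,-\lam^2]$; the upper bound $v_\gm\le \Lam(n-1)$ is exactly Bishop--Gromov with $\mathrm{Ric}\ge -(n-1)\Lam^2$, and the lower bound $v_\gm\ge\lam(n-1)\ge \lam/C_n$ (absorbing $(n-1)$ and making $C_n$ large enough) follows from Günther's comparison with $\sec\le -\lam^2$. To conclude, I renormalize: replacing $d_{\tilde\gm}$ by $v_\gm^{-1} d_{\tilde\gm}$ gives a $\G$-invariant geodesic metric with exponential growth rate $1$, still $\del'$-hyperbolic with $\del' = v_\gm^{-1}\lam^{-1}\log 2 \le \Lam\lam^{-1}(n-1)\log 2$ (using the lower bound $v_\gm\ge \lam/C_n$ and choosing the stated $\del$ generously — here I'd double-check the arithmetic so that the claimed $\del=\Lam\lam^{-1}(n-1)\log 2$ genuinely dominates $v_\gm^{-1}\lam^{-1}\log2$), and $D$-cobounded with $D = v_\gm^{-1}\mathrm{Diam}(M,\gm)\le C_n\|\G\|\lam^{-n}i_\gm^{1-n}\cdot \Lam(n-1)$ by combining (2) and the lower entropy bound, matching the stated $D$. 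This places $[\widetilde M_\gm]$ in $\scrD_\G^{\del,(D)}$ with the advertised constants, completing the proof.
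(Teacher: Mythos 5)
The central error is in the final renormalization: to bring the exponential growth rate to $1$ you must replace $d_{\tilde{\gm}}$ by $v_\gm\, d_{\tilde{\gm}}$, not by $v_\gm^{-1} d_{\tilde{\gm}}$. (If $d$ has orbit growth rate $v$, then a ball of $d$-radius $R/v$ is a ball of $(v\,d)$-radius $R$, so the orbit count grows like $e^{v\cdot R/v}=e^{R}$; your rescaling produces growth rate $v_\gm^{2}$, not $1$.) Because you inverted the scaling, the resulting bound on the hyperbolicity constant would require a lower bound on $v_\gm$, and as you yourself suspected the arithmetic does not close: $v_\gm^{-1}\lam^{-1}\log 2$ is \emph{not} dominated by $\Lam\lam^{-1}(n-1)\log 2$ when $\lam,\Lam$ are small. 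With the correct rescaling the normalized metric is $\bigl(v_\gm\,\lam^{-1}\log 2\bigr)$-hyperbolic, and one uses the \emph{upper} bound $v_\gm\leq \Lam(n-1)$ from Item~(4) to get $\del=\Lam\lam^{-1}(n-1)\log 2$; likewise the codiameter becomes $v_\gm\cdot\textnormal{Diam}(M,\gm)\leq \Lam(n-1)\,C_n\|\G\|\lam^{-n}i_\gm^{1-n}$. So the paper's stated $\del$ and $D$ are read off from (2), (3), and the \emph{upper} entropy bound, not the lower one.

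There is a second, earlier confusion in Item~(1). The straightening argument gives $\textnormal{Vol}(M,\gm)\leq \|M\|\cdot\sup\{\text{volume of a geodesic $n$-simplex in }\widetilde{M}\}$; equivalently $\|M\|$ is bounded \emph{below}, not above, by $\textnormal{Vol}$ over the supremal straight-simplex volume. Since geodesic simplices in a space with $\sec\leq -\lam^2$ have volume at most $c_n\lam^{-n}$, this yields $\textnormal{Vol}(M,\gm)\leq c_n\|\G\|\lam^{-n}$ directly — the step you flagged as "the main obstacle" is precisely the Gromov--Thurston argument, but you had the inequality reversed and so convinced yourself it ran the wrong way. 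A smaller slip in Item~(2): your parenthetical claims Bishop--Gromov with $\mathrm{Ric}\geq -(n-1)\Lam^2$ yields a lower bound on small-ball volume; a lower Ricci bound gives an \emph{upper} bound. The primary route you state (Günther with $\sec\leq-\lam^2$) is the correct one.

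Setting these aside, two of your steps genuinely diverge from the paper and both are sound alternatives: for Item~(2) the paper simply cites Croke's isoperimetric estimate, whereas your covering/packing argument is self-contained; and for the lower bound in Item~(4) the paper combines Item~(1) with Gromov's inequality $\|\G\|\leq C_n'\,\textnormal{Vol}(M,\gm)\,v_\gm^{\,n}$, whereas you invoke Günther's comparison on the universal cover directly, which even yields the sharper bound $v_\gm\geq (n-1)\lam$.
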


\begin{proof}
   Item (1) is exactly Gromov-Thurston's theorem (see e.g.~\cite[Sec.~0.3]{gromov.volume}), and Item (2) follows by combining Item (1) and \cite[Cor.~15]{crocke.isopineq}. For Item (3), note first that $\lam d_{\gm}=d_{\lam^2 \gm}$, and that $(M,\lam^2\gm)$ has sectional curvatures bounded above by -1. Then $(\widetilde{M},d_{\lam^2\tilde{\gm}})$ is $\CAT(-1)$, and hence $\widetilde{M}_{\gm}$ is $\log{2}$-hyperbolic \cite[Thm.~4.2 \& Thm.~5.1]{nica-spakula}. The bound for the codiameter follows directly from Item (2).
%To bound the rough geodesic constant, let $x,y\in \G$ and $\gam\subset \widetilde{M}$ be the geodesic joining $xp$ and $yp$. Then every point in $\gamma$ is within $\textnormal{Diam}(M,\gm)$ of a point in $\G \cdot p$. From this we can construct a $(2\textnormal{Diam}(M,\gm)+1)$-rough geodesic joining $x$ and $y$ for the metric $d$, by looking at points in $\G \cdot p$ which are closest to the points in $\gamma$ that are at an integer distance from $xp$. The bound then follows from Item (2). 

   The lower bound in Item (4) follows from Item (1) and the inequality $$\|\G\|\leq C_n'\textnormal{Vol}(M,\gm){v_\gm}^n,$$ where $C_n'>0$ depends only on $n$ \cite[Sec.~2.5]{gromov.volume}. For the upper bound, note that sectional curvatures bounded below by $-\Lam^2$ imply that the Ricci curvature is bounded below by $-(n-1)\Lam^2$. Then by Bishop-Gromov's inequality \cite[Lem.~7.1.3]{petersen}, $v_\gm$ is bounded above by $\Lam(n-1)$, which equals the exponential growth rate of the $n$-dimensional Riemannian manifold of constant sectional curvature equal to $-\Lam^2$. 

   The last assertion of the lemma follows from Items (3) and (4).
\end{proof}

\begin{corollary}\label{coro.firstineq}
    Let $(M,\gm),(M_\ast,\gm_\ast)$ be closed Riemannian manifolds of dimension $n$, let $\G$ be the fundamental group of $M$, and let $f_\ast:\G \ra \pi_1(M_\ast)$ be an isomorphism. Assume that their sectional curvatures are bounded above by  $-\lam^2<0$ and let $i_\gm,i_{\gm_\ast}$ denote the injectivity radii of $(M,\gm)$ and $(M_\ast,\gm_\ast)$ respectively. Fix $\eta_0>0$, and let $0< \eta\leq \eta_0$ and $L>0$ be such that 
    \begin{equation}\label{eq.ellRiemannian}
\ell_{\gm}[g]\leq \eta \ell_{\gm_\ast}[f_\ast g] \text{ for all } [g]\in \conj \text{ with }\ell_{\gm_\ast}[f_\ast g]\leq L.
   \end{equation}
    \begin{enumerate}
        \item If $L\geq 2\eta^{-1}_{0}i_\gm$, then $i_{\gm_\ast}\geq\eta_0^{-1}i_\gm$.
        \item If $L>\hat{L}_\ast:=6C_n \|\G\|\lam^{-n}{i_{\gm_\ast}}^{1-n}$, then $$\frac{\ell_\gm[g]}{\ell_{\gm_\ast}[f_\ast g]}\leq \eta \left(\frac{L}{L-\hat{L}_\ast}\right)+ \frac{2K\lam^{-1}\log{2}}{L-\hat{L}_\ast}$$
        for all $[g]\in \conj'$, where $K$ and $C_n$ are the constants from Theorem \ref{thm.approxLSRvscobounded} and Lemma \ref{lem.bounddifgeometry} respectively. 
    \end{enumerate}
\end{corollary}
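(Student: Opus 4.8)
The plan is to prove the two items by rather different means: item~(1) is elementary, resting on the fact that in negative curvature the injectivity radius is half the length of a shortest closed geodesic, whereas item~(2) is a direct application of Theorem~\ref{thm.approxLSRvscobounded} fed with the coarse data of Lemma~\ref{lem.bounddifgeometry}. I record at the outset that $\G$, being the fundamental group of a closed aspherical manifold, is torsion-free, so $\conj'=\conj\setminus\{[o]\}$; moreover, for any negatively curved metric $\gm'$ with injectivity radius $i_{\gm'}$ on such a manifold, a shortest non-contractible loop is a smooth closed geodesic of length $2i_{\gm'}$, and hence $\ell_{\gm'}[g]\geq 2i_{\gm'}>0$ for every non-trivial $[g]$.

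\emph{Item~(1).} Choose $[g_0]\in\conj'$ such that $f_\ast g_0$ represents a shortest closed geodesic of $(M_\ast,\gm_\ast)$, so $\ell_{\gm_\ast}[f_\ast g_0]=2i_{\gm_\ast}$. If $2i_{\gm_\ast}\leq L$, then \eqref{eq.ellRiemannian} applies to $[g_0]$ and gives $2i_\gm\leq\ell_\gm[g_0]\leq\eta\,\ell_{\gm_\ast}[f_\ast g_0]=2\eta\,i_{\gm_\ast}\leq 2\eta_0\,i_{\gm_\ast}$, so $i_{\gm_\ast}\geq\eta_0^{-1}i_\gm$. If instead $2i_{\gm_\ast}>L$, then the hypothesis $L\geq 2\eta_0^{-1}i_\gm$ already forces $i_{\gm_\ast}>\eta_0^{-1}i_\gm$. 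Either way the claim follows.

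\emph{Item~(2).} I would apply Theorem~\ref{thm.approxLSRvscobounded} with its space $X_\ast$ taken to be $\widetilde M_\gm$ and its space $X$ taken to be the universal cover $\widetilde M_{\ast,\gm_\ast}$ of $M_\ast$ with the lifted Riemannian metric, both equipped with the action by Deck transformations, where $\G$ acts on $\widetilde M_{\ast,\gm_\ast}$ through the isomorphism $f_\ast$; then $\ell_{X_\ast}[g]=\ell_\gm[g]$ and $\ell_X[g]=\ell_{\gm_\ast}[f_\ast g]$ for all $[g]$. The hypotheses of the theorem hold: $\widetilde M_{\ast,\gm_\ast}$ is geodesic as a complete Riemannian manifold; by Lemma~\ref{lem.bounddifgeometry}(3) applied to $(M,\gm)$, $\widetilde M_\gm$ is $\del$-hyperbolic with $\del=\lam^{-1}\log 2$ and the $\G$-action on it is cocompact, hence non-elementary; and by the same lemma applied to $(M_\ast,\gm_\ast)$ — recalling that simplicial volume is a homotopy, hence $\pi_1$-, invariant, so that $\|\pi_1(M_\ast)\|=\|\G\|$ — the $\G$-action on $\widetilde M_{\ast,\gm_\ast}$ is $D$-cobounded with $D=C_n\|\G\|\lam^{-n}i_{\gm_\ast}^{1-n}$, so that $6D=\hat L_\ast$. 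Since both actions are geometric, $\Dil(X_\ast,X)$ is finite, and from the definition of dilation $\ell_\gm[g]/\ell_{\gm_\ast}[f_\ast g]\leq\Dil(X_\ast,X)$ for every $[g]\in\conj'$. As $L>\hat L_\ast=6D$, Theorem~\ref{thm.approxLSRvscobounded} gives
\[
\Dil(X_\ast,X)\;\leq\;\Bigl(\sup_{0<\ell_{\gm_\ast}[f_\ast g]\leq L}\frac{\ell_\gm[g]}{\ell_{\gm_\ast}[f_\ast g]}\Bigr)\cdot\frac{L-2D}{L-6D}\;+\;\frac{2K\lam^{-1}\log 2}{L-6D},
\]
and \eqref{eq.ellRiemannian} bounds the displayed supremum by $\eta$, while $\tfrac{L-2D}{L-6D}\leq\tfrac{L}{L-6D}=\tfrac{L}{L-\hat L_\ast}$ because $D\geq 0$ and $L-6D>0$. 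Combining these yields the asserted inequality for all $[g]\in\conj'$.

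The argument is essentially bookkeeping, and the only steps requiring care are checking that the geometric actions satisfy every hypothesis of Theorem~\ref{thm.approxLSRvscobounded} (non-elementarity, $D$-coboundedness with the stated constant, the value $\lam^{-1}\log 2$ of the hyperbolicity constant coming from the $\CAT(-1)$ comparison) and matching the parameters $6D$ and $\del$ produced by Lemma~\ref{lem.bounddifgeometry} with $\hat L_\ast$ and $\lam^{-1}\log 2$; in particular one must rewrite the cobounded constant of $\widetilde M_{\ast,\gm_\ast}$, which a priori involves $\|\pi_1(M_\ast)\|$, in terms of $\|\G\|$.
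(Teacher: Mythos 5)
Your proof is correct and follows essentially the same route as the paper: item (1) uses the identity $i_{\gm'}=\tfrac12\inf_{[g]\in\conj'}\ell_{\gm'}[g]$ (you organize it as a case split where the paper argues by contradiction, but the content is identical), and item (2) applies Theorem~\ref{thm.approxLSRvscobounded} to $X_\ast=\widetilde M_\gm$ and $X=\widetilde M_{\ast,\gm_\ast}$ with the coboundedness and hyperbolicity constants supplied by Lemma~\ref{lem.bounddifgeometry}, then uses $\tfrac{L-2D}{L-6D}\leq\tfrac{L}{L-6D}$. Your bookkeeping (correctly citing part (3) of the lemma for the cobounded constant and spelling out why $\|\pi_1(M_\ast)\|=\|\G\|$) is if anything slightly more careful than the published proof.
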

\begin{remark*}
    Note that the corollary does not require a lower bound for the sectional curvatures.
\end{remark*}
\begin{proof}
To prove (1), note that $i_\gm=\frac{1}{2}\inf_{[g]\in \conj'}{\ell_{\gm}[g]}$, with a similar formula valid for $i_{\gm_\ast}$ \cite[p.~258]{petersen}. Suppose for the sake of contradiction that $i_{\gm_\ast}<\eta_0^{-1}i_\gm$. Then there exists $[g]\in \conj'$ such that $i_{\gm_\ast}=\ell_{\gm_\ast}[f_\ast g]/2$, and since $L\geq 2\eta_{0}^{-1}i_\gm$ we have $$i_{\gm}\leq \ell_{\gm}[g]/2\leq \eta_0\ell_{\gm_\ast}[f_\ast g]/2=\eta_0 i_{\gm_\ast},$$ 
contradicting our assumption. 

To prove (2), consider the actions of $\G$ on $X_\ast=\widetilde{M}_{\gm}$ and $X=\widetilde{M_\ast}_{\gm_\ast}$, where $\G$ acts on $\widetilde{M_\ast}_{\gm_\ast}$ via $f_\ast $. By Lemma \ref{lem.bounddifgeometry} (2) we have that the action of $\G$ on $\widetilde{M_\ast}_{\gm_\ast}$ is $D_\ast$-cobounded with $D_\ast=C_n \|\G\|\lam^{-n}{i_{\gm_\ast}}^{1-n}=\hat{L}_\ast /6$.  The conclusion follows by Lemma \ref{lem.bounddifgeometry} (3) and Assumption \eqref{eq.ellRiemannian}, and Theorem \ref{thm.approxLSRvscobounded} applied to $X_\ast$, $X$ and $L$.
\end{proof}

The corollary above gives us one of the bounds in \eqref{eq.buttconclusion}. For the other bound, we would like to apply the corollary with the roles of $(M,\gm)$ and $(M_\ast,\gm_\ast)$ exchanged. To do this we need the following proposition.

\begin{proposition}\label{prop.bounddilfromdil}
    Let $(M,\gm), (M_\ast,\gm_\ast)$ be closed Riemannian manifolds of dimension $n\geq 2$, sectional curvatures contained in the interval $[-\Lam^2,-\lam^2]\subset \R_{<0}$, and injectivity radii bounded below by $i$. 
    Let $\G=\pi_1(M)$, and consider an isomorphism $f_\ast:\G \ra \pi_1(M_\ast)$. Then for all $P>1$ there exists a constant $R=R(\Gam,\lam,\Lam,i,P)>0$ such that if 
        $\ell_\gm[g]\leq P\ell_{\gm_\ast}[f_\ast g]$ for all $[g]\in \conj$, then
        \begin{equation}\label{eq.sup}
            \sup_{[g]\in \conj'}{\frac{\ell_{\gm_\ast}[f_\ast g]}{\ell_{\gm}[g]}} \leq R.
        \end{equation}
         Moreover, if $n\geq 3$ then $R$ does not depend on $P$. 
\end{proposition}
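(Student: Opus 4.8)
The plan is to bound the quantity in \eqref{eq.sup} by a compactness argument in the space $\scrD_\G$ of metric structures. Write $X=\widetilde M_\gm$ and $X_\ast=\widetilde M_{\gm_\ast}$, the latter with the $\G$-action coming from $f_\ast$; by Lemma~\ref{lem.bounddifgeometry} both induce points $[X],[X_\ast]\in\scrD_\G^{\del,(D)}$ with $\del,D$ controlled by $\lam,\Lam,i,n$ and $\|\G\|$, and the supremum in \eqref{eq.sup} is exactly $\Dil(X_\ast,X)$. By the definition of $\Del$ we have $\Dil(X_\ast,X)\cdot\Dil(X,X_\ast)=e^{\Del([X],[X_\ast])}$, and since a finite dilation $\Dil(X,X_\ast)$ forces $v_{\gm_\ast}/v_\gm\le \Dil(X,X_\ast)$ by orbit counting, Lemma~\ref{lem.bounddifgeometry}(4) gives $\Dil(X,X_\ast)\ge v_{\gm_\ast}/v_\gm\ge \lam/(C_n(n-1)\Lam)>0$. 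Hence $\Dil(X_\ast,X)\le e^{\Del([X],[X_\ast])}\cdot\Lam(n-1)C_n/\lam$, and it suffices to bound $\Del([X],[X_\ast])$ — that is, to bound the $\Del$-distance between the metric structures arising from negatively curved metrics on $M$ (resp.\ $M_\ast$) with curvature in $[-\Lam^2,-\lam^2]$ and injectivity radius $\ge i$, using when $n=2$ the additional information $\Dil(X,X_\ast)\le P$.

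Suppose no such bound exists. Then there are sequences of metrics $\gm_k,\gm_{\ast,k}$ as above with $\Dil(X_k,X_{\ast,k})\le P$ but $\Dil(X_{\ast,k},X_k)\to\infty$, where $X_k=\widetilde M_{\gm_k}$ and $X_{\ast,k}=\widetilde M_{\gm_{\ast,k}}$. By Lemma~\ref{lem.bounddifgeometry}(1),(2) these metrics have uniformly bounded volume and diameter, so the Cheeger--Gromov compactness theorem applies: after passing to a subsequence there are diffeomorphisms $\phi_k$ of $M$ and $\psi_k$ of $M_\ast$ with $\phi_k^*\gm_k\to\gm_\infty$ and $\psi_k^*\gm_{\ast,k}\to\gm_{\ast,\infty}$ in the $C^{1,\alpha}$ topology. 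Since $C^{1,\alpha}$-close metrics are bi-Lipschitz with constant tending to $1$ and the marked length spectrum is continuous for negatively curved metrics, the associated metric structures converge in $(\scrD_\G,\Del)$: writing $a_k,b_k\in\Out(\G)$ for the outer automorphisms induced by $\phi_k,\psi_k$ (the latter transported through $f_\ast$), we have $[X_k]=a_k\cdot\mathfrak a_k$ and $[X_{\ast,k}]=b_k\cdot\mathfrak b_k$ with $\mathfrak a_k\to\mathfrak a$ and $\mathfrak b_k\to\mathfrak b$ in $\scrD_\G$, where $\Out(\G)$ acts on $\scrD_\G$ by $\Del$-isometries preserving $\Dil$. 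As $\log\Dil(\cdot,\cdot)$ is $1$-Lipschitz in each variable for $\Del$, we get $\Dil(X_{\ast,k},X_k)=\Dil(\mathfrak b_k,\gamma_k\cdot\mathfrak a_k)$ with $\gamma_k:=b_k^{-1}a_k$, so it is enough to show that the $\gamma_k$ take only finitely many values — this already contradicts $\Dil(X_{\ast,k},X_k)\to\infty$.

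When $n\ge 3$ this is immediate: $\G=\pi_1 M$ is a torsion-free hyperbolic group that is a Poincaré duality group of dimension $\ge 3$, hence admits no splitting over a virtually cyclic subgroup, so $\Out(\G)$ is finite (e.g.\ by Paulin's theorem). Thus $\{\gamma_k\}$ is finite, and along a further subsequence $\gamma_k\equiv\gamma$, giving $\Dil(X_{\ast,k},X_k)\to\Dil(\mathfrak b,\gamma\cdot\mathfrak a)<\infty$ — a contradiction; this used nothing about the hypothesis $\Dil(X_k,X_{\ast,k})\le P$, so $R$ can be taken independent of $P$. When $n=2$, $\Out(\G)=\mathrm{MCG}(S)$ is infinite and we must use $\Dil(X_k,X_{\ast,k})=\Dil(a_k\cdot\mathfrak a_k,b_k\cdot\mathfrak b_k)\le P$; by the $\Out(\G)$-invariance of $\Dil$ and continuity this yields $\Dil(\gamma_k\cdot\mathfrak a,\mathfrak b)\le C(P,\mathfrak a,\mathfrak b)$ for all large $k$, and comparing with a fixed hyperbolic metric structure $\mathfrak o$ on $S$ through submultiplicativity gives $\Dil(\gamma_k\cdot\mathfrak o,\mathfrak o)\le C'$. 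Now $\Dil(\gamma\cdot\mathfrak o,\mathfrak o)$ is the exponential of a Thurston-type asymmetric Lipschitz distance from $\mathfrak o$ to $\gamma\cdot\mathfrak o$; the corresponding forward ball of radius $C'$ is a compact subset of $\calT(S)$ and meets the discrete $\mathrm{MCG}(S)$-orbit of $\mathfrak o$ in finitely many points, so again $\{\gamma_k\}$ is finite and we conclude as before, now with $R$ depending on $P$.

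The main obstacle is the case $n=2$. There the space $\scrD_\G^{\del,(D)}$ has infinite $\Del$-diameter — a surface admits arbitrarily large Dehn twists without leaving the $\inj\ge i$ locus — so the hypothesis $\ell_\gm\le P\ell_{\gm_\ast}$ cannot be dispensed with: it is precisely the ingredient that keeps the relative mapping class $\gamma_k$ in a fixed finite set, and the real work is converting this one-sided comparison of length spectra into honest properness of the $\mathrm{MCG}(S)$-action on Teichm\"uller space. A secondary technical point, common to both cases, is establishing that $C^{1,\alpha}$ (equivalently, Gromov--Hausdorff) convergence of the Riemannian metrics implies convergence of the induced points of $(\scrD_\G,\Del)$; this follows from the fact that nearby metrics are bi-Lipschitz with constant close to $1$, together with continuity of the marked length spectrum under such convergence.
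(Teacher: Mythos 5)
Your proposal is correct in outline and reaches the right conclusion, but it takes a genuinely different technical route from the paper's. Both proofs share the same skeleton: reduce \eqref{eq.sup} to a compactness/cocompactness statement in the space of metric structures $\scrD_\G$, treat $n\ge 3$ via finiteness of $\Out(\G)$, and treat $n=2$ via properness of the $\Out(\G)$-action. The divergence is in how compactness is obtained. You run a contradiction argument through Cheeger--Gromov precompactness of Riemannian metrics with two-sided curvature bounds, injectivity radius bounded below, and (via Lemma~\ref{lem.bounddifgeometry}(1),(2)) bounded volume and diameter, then push $C^{1,\alpha}$-convergence of metrics through a bi-Lipschitz argument to get $\Del$-convergence of the induced metric structures. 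The paper instead stays entirely inside the coarse framework: Lemma~\ref{lem.bounddifgeometry} and Remark~\ref{rmk.RGvscobounded} place $[\widetilde M_\gm], [\widetilde M_{\ast}{}_{\gm_\ast}]$ in $\scrD_\G^{\del,\al}$, the function $\Psi([d_0],[d_1])=\Dil(d_0,d_1)$ is continuous on $\scrD_\G\times\scrD_\G$ (Prop.~3.5 and Lem.~3.6 of \cite{reyes}), and \cite[Thm.~1.7]{reyes} directly gives compactness of $\scrD_\G^{\del,\al}$ when $\Out(\G)$ is finite and cocompactness of the $\Out(\G)$-action when it is not. For $n=2$, where you use discreteness of the MCG orbit in $\calT(S)$ and compactness of a forward ball in Thurston's Lipschitz metric (which is fine — the collar lemma rules out pinching), the paper uses properness of $\phi\mapsto\Dil(\ov\phi(d_0),d_0)$ on $\Out(\G)$ to produce an explicit finite set $\calF\subset\Out(\G)$ and from it an explicit $R$. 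The net effect: the paper's argument is self-contained within its coarse geometric setup and yields a directly describable constant; yours imports heavier differential-geometric machinery and, being a compactness-by-contradiction, gives no effective bound. Two technical points worth flagging in your version: Cheeger--Gromov gives only a $C^{1,\alpha}$ limit, so you should note that translation lengths and $\del$-hyperbolicity of the universal cover persist for such limits (e.g.\ via the $\CAT(-\lam^2)$ comparison property being closed under $C^0$ limits); and your preliminary bound $v_{\gm_\ast}/v_\gm\le\Dil(X,X_\ast)$ needs the passage from orbit growth to conjugacy-class growth, standard for cocompact actions but not entirely free.
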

\begin{proof}
Let $\G$ act on $\widetilde{M}_{\gm}, \widetilde{M_\ast}_{\gm_\ast}$ as in the proof of Corollary \ref{coro.firstineq} (2). By Lemma \ref{lem.bounddifgeometry} and Remark \ref{rmk.RGvscobounded}, the metric structures $\rho_\gm=[\widetilde{M}_{\gm}]$ and $\rho_{\gm_\ast}=[\widetilde{M_\ast}_{\gm_\ast}]$ belong to $\scrD_\G^{\del,\al}$, for $\del$ and $\al$ depending only on $n,\|\G\|,\lam,\Lam$ and $i$. In addition, the function $\Psi:\scrD_\G\times \scrD_\G \ra \R$ given by
$\Psi([d_0],[d_1])\ra \Dil(d_0,d_1)$, where $d_0$ and $d_1$ have exponential growth rate 1, is continuous for the topology induced by $\Del$. This follows by combining Proposition 3.5 and Lemma 3.6 in \cite{reyes}. 

If $n\geq 3$, then $\Out(\G)$ is finite. Indeed, otherwise by \cite[Cor.~1.3]{bestvina.feighn.stable} $\G$ would split over a virtually cyclic subgroup. Since $\partial \G$ is homeomorphic to the $(n-1)$-dimensional sphere, this virtually cyclic subgroup would have to be infinite, but this contradicts \cite[Thm.~6.2]{bowditch} because $\partial \G$ has no local-cut points. Since $\G$ is torsion-free, by \cite[Thm.~1.7]{reyes} we deduce that $\scrD_\G^{\del,\al}$ is compact. The conclusion then follows by the continuity of $\Psi$, Lemma \ref{lem.bounddifgeometry} (4), and noting that the left-hand side of \eqref{eq.sup} equals $v_{\gm}\Psi(\rho_{\gm_\ast},\rho_\gm)/v_{\gm_\ast}$.

If $n=2$, then \cite[Thm.~1.7]{reyes} still implies that $\Out(\G)$ acts cocompactly on $\scrD_\G^{\del,\al}$, so let $\calB\subset \scrD_\G^{\del,\al}$ be a compact set such that $\Out(\G)\cdot \calB=\scrD_\G^{\del,\al}$. We further assume that $\rho_{\gm_\ast}\in \calB$, and that $D=\textnormal{Diam}_\Del(\calB)$ is minimal among all the diameters of compact subsets of $\scrD^{\del,\al}_\G$ whose translates by $\Out(\G)$ cover $\scrD_\G^{\del,\al}$. We also fix an arbitrary point $\rho_0=[d_0]\in \calB$ with exponential growth rate 1. By the same argument in the proof of \cite[Thm.~1.7]{reyes} we can prove that the function $\Out(\G) \ra \R$ that maps $\phi$ to $\Psi(\phi(\rho_0),\rho_0)=\Dil(\overline{\phi}(d_0),d_0)$ is proper, where $\ov{\phi}$ is any automorphism of $\G$ representing $\phi$. This implies that the set $\calF\subset \Out(\G)$ of outer automorphisms $\phi$ represented by an automorphism $\overline{\phi}$ such that $$\Dil(\overline{\phi}(d_0),d_0)\leq \hat{P}:=C_2\Lam \lam^{-1}e^{2D}P$$
is finite, where $C_2$ is the constant from Lemma \ref{lem.bounddifgeometry}. Therefore, if $\phi\in \Out(\G)$ is represented by $\overline{\phi}$ and $\rho\in \phi \calB$, then by Lemma \ref{lem.bounddifgeometry} (4) we have
\[
\Dil(\overline{\phi}(d_0),d_0)\leq \Dil(\overline{\phi}(d_0),\widetilde{M_\ast}_{\gm_\ast})\Dil(\widetilde{M_\ast}_{\gm_\ast},\widetilde{M}_{\gm})\Dil(\widetilde{M}_{\gm},d_0)\leq \hat{P},
\]
and hence $\phi\in \calF$. The conclusion follows by choosing 
\[
R=C_2\Lam \lam^{-1}\max\left\{\Psi(\rho_1,\rho_2)\colon \rho_1,\rho_2\in \bigcup_{\phi\in \calF}{\phi \calB}\right\}
\]
and noting that $D, \hat{P}, \calF$ and $R$ depend only on $\G$, $\del, \al$ and $P$. 
\end{proof}

\begin{proof}[Proof of Theorem \ref{thm.butt}]
    Let $n=1+\dim(\partial \G)$ be the dimension of $M$ and $M_\ast$, and let $i_{\gm_\ast}$ be the injectivity radius of $(M_\ast,\gm_\ast)$. Also, let $K,C_n$ be the constants from Theorem \ref{thm.approxLSRvscobounded} and Lemma \ref{lem.bounddifgeometry} respectively, and let $\hat{L}_\ast:=6C_n\|\G\|\lam^{-n}((1-\ep_0)i_\gm)^{1-n}$. By Corollary \ref{coro.firstineq}, if Assumption \eqref{eq.buttassumption} holds for $L> 2\max\{(1-\ep_0)i_\gm,\hat{L}_\ast\}$ then $\hat{L}_\ast \geq 6C_n\|\G\|\lam^{-n}i_{\gm_\ast}^{1-n}$, and hence for all $[g]\in \conj$ we have
    \begin{align*}
        \frac{\ell_{\gm}[g]}{\ell_{\gm_\ast}[f_\ast g]} &\leq (1-\ep)^{-1}\left(\frac{L}{L-\hat{L}_\ast}\right)+\frac{2K\lam^{-1}\log{2}}{L-\hat{L}_\ast} \\
        &\leq (1-\ep)^{-1}+\frac{2(\hat{L}_\ast(1-\ep_0)^{-1}+2K\lam^{-1}\log{2})}{L} \\
        &\leq (1-\ep_0)^{-1}+\frac{2(\hat{L}_\ast(1-\ep_0)^{-1}+2K\lam^{-1}\log{2})}{2\hat{L}_\ast}=:P. \\
    \end{align*}
In particular, for all $[g]\in \conj'$ we deduce
  \begin{align*}
        \frac{\ell_{\gm_\ast}[f_\ast g]}{\ell_{\gm}[g]} & 
        \geq \left( (1-\ep)^{-1}+\frac{2(\hat{L}_\ast(1-\ep_0)^{-1}+2K\lam^{-1}\log{2})}{L} \right)^{-1} \\
        & \geq (1-\ep)-\frac{2(\hat{L}_\ast(1-\ep_0)^{-1}+2K\lam^{-1}\log{2})}{L},\\
    \end{align*}
which settles the lower bound in \eqref{eq.buttconclusion}. 

For the upper bound, since $P$ only depends on $\G,\lam,\Lam,i_\gm$ and $\ep_0$, by Proposition \ref{prop.bounddilfromdil} there exists $R=R(\G,\lam,\Lam,i_\gm,\ep_0)>0$ such that $\ell_{\gm_\ast}[f_\ast g]\leq L$ holds for all $[g]\in \conj$ satisfying $\ell_{\gm}[x]\leq L/R$. In consequence, \eqref{eq.buttassumption} holds for all $[g]\in \conj'$ such that $\ell_{\gm}[g]\leq L/R$. If we define $L_\ast:=6C_n\|\G\|\lam^{-n}{i_{\gm}}^{-1+n}$, Corollary \ref{coro.firstineq} implies that for $L> 2RL_\ast$ and all $[g]\in \conj'$ we have
\begin{align*}
        \frac{\ell_{\gm_\ast}[f_\ast g]}{\ell_{\gm}[g]} & 
        \leq (1+\ep)\left(\frac{L}{L-RL_\ast}\right)+\frac{2RK\lam^{-1}\log{2}}{L-RL_\ast} \\
        &= (1+\ep)+\frac{R(L_\ast(1+\ep)+2K\lam^{-1}\log{2})}{L-RL_\ast}\\
        & \leq (1+\ep)+\frac{2R(L_\ast(1+\ep_0)+2K\lam^{-1}\log{2})}{L}.\\
    \end{align*}
This proves the upper bound in \eqref{eq.buttconclusion}, and the theorem follows by setting $L_0:=2\max \{RL_\ast,\hat{L}_\ast,(1-\ep_0)i_\gm\}+1$, which depends only on $\G,\lam,\Lam,i_\gm$ and $\ep_0$.
\end{proof}

\subsection*{Open access statement}
For the purpose of open access, the authors have applied a Creative Commons Attribution (CC BY) licence to any Author Accepted Manuscript version arising from this submission.

%-------------------------------------------------------

\noindent\small{Department of Mathematics, 
University of Warwick,
Coventry, CV4 7AL, UK}\\
\small{\textit{Email address}: \texttt{stephen.cantrell@warwick.ac.uk}\\
\\
\small{Department of Mathematics, Yale University, New Haven, CT 06511, USA}\\
\small{\textit{Email address}: \texttt{eduardo.c.reyes@yale.edu}}\\

\end{document}